\documentclass[a4paper,12pt,reqno]{amsart}
 
\usepackage[margin=2cm]{geometry} 
\usepackage{amsmath,amsthm,amssymb,enumitem}
\usepackage{amsrefs, dsfont}
\usepackage[ocgcolorlinks,hyperfootnotes=false,colorlinks=true,citecolor=blue,linkcolor=blue,urlcolor=blue]{hyperref}

\newcommand{\fl}{f{\kern0.075em}l}
\newcommand{\norm}[1]{\left\lVert #1 \right\rVert}

\theoremstyle{plain}
\newtheorem{thm}{Theorem}[section]
\newtheorem{lem}[thm]{Lemma}
\newtheorem{prop}[thm]{Proposition}
\newtheorem{cor}[thm]{Corollary}

\theoremstyle{definition}
\newtheorem{defn}[thm]{Definition}
\newtheorem{exmp}[thm]{Example}
\newtheorem{Remark}[thm]{Remark}

\usepackage{mathtools}

\usepackage{orcidlink}

\setlist[enumerate]{itemsep=2mm, topsep=0mm}

\title[On the Bergman Metric near Exponentially Flat Infinite Type Boundary Points]
{On the Bergman Metric near Exponentially Flat Infinite Type Boundary Points}
\author{Ravi Shankar Jaiswal}
\address{Department of Mathematics, Southern University of Science and Technology, Xueyuan Avenue, Shenzhen, Guangdong, China 518055}
\email{ravi@sustech.edu.cn}
\date{\today}
\subjclass[2020]{Primary 32T27; Secondary 32A25, 32F45, 32Q20.}
\keywords{Bergman metric, Cheng--Yau conjecture, Infinite type boundary points.}

\begin{document}

\addtolength{\jot}{2mm}
\addtolength{\abovedisplayskip}{1mm}
\addtolength{\belowdisplayskip}{1mm}
\maketitle
\begin{abstract}
   We prove that the Bergman metric of a (possibly unbounded) pseudoconvex domain with an exponentially flat infinite type boundary point cannot be Einstein. Our result provides further evidence in support of the Cheng--Yau conjecture beyond the finite type setting.
\end{abstract}
\section{Introduction}
For a bounded pseudoconvex domain $D \subset \mathbb{C}^n$, $n\in\mathbb{N}$, there are two natural
biholomorphic invariant metrics defined on $D$. The first is the Bergman metric, which is a canonical K\"ahler metric. The second is the complete K\"ahler--Einstein metric, whose existence and uniqueness (upto a positive scaling) was established by Cheng--Yau \cite{Cheng Yau} and Mok--Yau \cite{Mok Yau}. These two metrics capture the interplay between function theory and complex geometry of the domain.

The existence of these two canonical metrics naturally raises the question of
when they coincide. Motivated by this problem, Yau \cite{Yau} conjectured that
the Bergman metric of a bounded pseudoconvex domain is complete and Einstein if
and only if the domain is biholomorphic to a bounded homogeneous domain. Since
every $C^2$-smooth bounded homogeneous domain is biholomorphic to the unit ball by
the theorem of Wong \cite{Wong} and Rosay \cite{Rosay}, and the Bergman metric of every $C^1$-smooth bounded pseudoconvex domain is complete by the theorem of Ohsawa \cite{Oh81}, Yau's conjecture
extends an earlier conjecture of Cheng \cite{Cheng79}, which states that the
Bergman metric of a $C^{\infty}$-smooth bounded strongly pseudoconvex domain is
K\"ahler--Einstein if and only if the domain is biholomorphic to the unit ball.
Combining the conjectures of Cheng and Yau yields the celebrated Cheng--Yau conjecture.

\textbf{Conjecture (Cheng--Yau {\cites{Cheng79,Yau}}).}
A $C^{\infty}$-smooth bounded pseudoconvex domain in \(\mathbb{C}^n\) is
\emph{Bergman--Einstein}, i.e., its Bergman metric is K\"ahler--Einstein if
and only if it is biholomorphic to the unit ball of the same dimension.

The conjecture has attracted considerable attention in recent years. Cheng's
conjecture for $C^{\infty}$-smooth bounded strongly pseudoconvex domains was first
established in complex dimension two by Fu--Wong \cite{Fu_Wong} and
independently by Nemirovski--Shafikov \cite{Nemiroski_Shafikov}, and was later
resolved in all dimensions by Huang--Xiao \cite{Huang_Xiao}. More recently,
Savale--Xiao \cite{Savale_2025} proved the conjecture for $C^{\infty}$-smooth bounded
finite type pseudoconvex domains in \(\mathbb{C}^2\). Earlier, Fu--Wong
\cite{Fu_Wong} had obtained the same conclusion for $C^{\infty}$-smooth bounded complete
Reinhardt pseudoconvex domains of finite type in \(\mathbb{C}^2\). Very
recently, Hsiao--Huang--Li \cite{HHL26} established the Cheng--Yau
conjecture for bounded real analytic pseudoconvex domains and for $C^{\infty}$-smooth
bounded convex domains of finite type in the sense of D'Angelo
\cite{D'Angelo1982}. Subsequent generalizations were obtained, including Stein manifolds and Stein
spaces with compact strongly pseudoconvex boundaries; see Huang--Li \cite{Huang_Li},
Ebenfelt--Xiao--Xu \cites{EXX22, EXX24}, Ganguly--Sinha \cite{GS26} and references therein. 
Related variations of Cheng’s conjecture were also studied by S. Li in his works  \cites{Li05, Li09, Li16} and in a recent paper by Yuan \cite{Yuan25}.

The Cheng--Yau conjecture has also been investigated in the setting of
unbounded pseudoconvex domains. Huang--James--Li \cite{HJL25} proved that
the Bergman metric of a (possibly unbounded) pseudoconvex domain in
\(\mathbb{C}^{n + 1}\), cannot be Einstein if its boundary contains a
non-smooth strongly pseudoconvex polyhedral boundary point. More recently,
Hsiao--Huang--Li \cite{HHL26} showed that the Bergman metric of a (possibly
unbounded) pseudoconvex domain in \(\mathbb{C}^{n + 1}\), cannot be Einstein
if its boundary contains a $C^{\infty}$-smooth non-strongly pseudoconvex \(h\)-extendible
boundary point. 

In this paper, we continue this line of investigation for a class of infinite type domains. We prove that the Bergman metric cannot be Einstein by establishing the asymptotic behavior of the Bergman biholomorphic invariants. This strategy for proving that the Bergman metric cannot be Einstein was previously employed by Huang--James--Li \cite{HJL25} and Hsiao--Huang--Li \cite{HHL26}.

We first state the non-tangential asymptotic behavior of the Bergman kernel, the Bergman metric, the Bergman canonical invariant, and the Ricci curvature at an exponentially flat infinite type boundary point, see Definition \ref{exp flat fun}, in the following theorem.
%Our approach closely follows the methods developed by Huang--James--Li \cite{HJL25}, suitably adapted to the setting of exponentially flat infinite type boundary points.
\begin{thm}\label{1.3}
    Let $D \subset \mathbb{C}^{n + 1}$ be a pseudoconvex domain, possibly unbounded, and let $0 \in bD$. Assume $0$ is an exponentially flat boundary point and $q(t) = (-t, 0)$, then, for $\xi \in \mathbb{C}^{n + 1} \setminus \{0\}$, 
    \begin{enumerate}[labelsep=10mm,leftmargin=20mm,itemsep=5mm]
    \item $\begin{aligned}[t]
        \lim_{t \to 0^+}\frac{\kappa_D(q(t))}{t^{-2}d^*(t)^{-2n}} = \frac{1}{4\pi\operatorname{vol}(B_n(0,1))},
    \end{aligned}$
        \item $\begin{aligned}[t]
     \lim_{t \to 0^+} \frac{B_D(q(t); \xi)}{\sqrt{{|\xi_1|^2}/{2t^2} + (n+1) {|\xi'|^2}/{d^*(t)^2}}} = 1, 
    \end{aligned}$
        \item $\begin{aligned}[t]
        \lim_{t \to 0^{+}} J_D(q(t)) = \frac{2 \pi^{n + 1}(n + 1)^n}{n!}, \text{ and}
    \end{aligned}$
    \item $\begin{aligned}[t]
    \lim_{t \to 0^{+}} R_D(q(t); \xi) = -1.
    \end{aligned}$
    \end{enumerate} 
\end{thm}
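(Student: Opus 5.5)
The strategy is scaling plus localization, following Huang--James--Li \cite{HJL25} and Hsiao--Huang--Li \cite{HHL26}, with the scaling adapted to the exponentially flat model. I will work with Definition \ref{exp flat fun} in the following form. Near $0$, $D$ is given by $\{\rho<0\}$ with
\[
\rho(z)=\operatorname{Re} z_1+ P(z')+\text{error},
\]
where $P$ is an exponentially flat function of $|z'|$, something like $e^{-1/|z'|^{\alpha}}$. The quantity $d^*(t)$ is the radius at which $P$ reaches the level $t$, that is, $P(d^*(t))=t$.

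\textbf{Step 1 (scaling).} Take the anisotropic dilations
\[
\Phi_t(z_1,z')=\bigl(z_1/t,\; z'/d^*(t)\bigr),
\]
so that $q(t)$ is sent to $(-1,0)$. The rescaled domains $D_t=\Phi_t(D\cap U)$ are
\[
\{\operatorname{Re} w_1 + t^{-1}P(d^*(t)w')+\dots<0\}.
\]
For exponentially flat $P$ the rescaled function $t^{-1}P(d^*(t)w')$ tends to $0$ for $|w'|<1$ and to $+\infty$ for $|w'|>1$. This is the key feature of infinite type: the limit is a degenerate domain rather than a polynomial model. I therefore expect $D_t$ to converge, in the local Hausdorff sense, to the product
\[
D_\infty=\{\operatorname{Re} w_1<0\}\times B_n(0,1).
\]
The Bergman kernel of $D_\infty$ on the diagonal at $(-1,0)$ is the product of the half-plane kernel $1/(4\pi)$ and the ball kernel $1/\operatorname{vol}(B_n(0,1))$. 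Taking the transformation rule $\kappa_D(q)=\kappa_{D_t}(-1,0)\,t^{-2}d^*(t)^{-2n}$ into account, this gives exactly the constant in (1).

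\textbf{Step 2 (localization).} Two facts are needed. First, $\kappa_D(q(t))$ and its derivatives up to order four are asymptotic to those of $D\cap U$. I will get this from Hörmander $L^2$ estimates for $\bar\partial$ on the pseudoconvex domain $D$, using a plurisubharmonic weight that has a log pole at $q(t)$ and a local peak or barrier at $0$. Such a barrier exists because $0$ lies on the boundary of $\{\operatorname{Re} z_1 + P<0\}$, which is bounded by a convex-type hypersurface, so a Lipschitz barrier like $\operatorname{Re} z_1$ suffices. Unboundedness of $D$ is harmless because only $L^2$ methods are used. Second, I need convergence of the Bergman kernels of $D_t$ to that of $D_\infty$, in $C^\infty$ on compact subsets. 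Here I would use Ramadanov-type stability: $D_t$ contains $D_\infty$ shrunk slightly, and $D_t$ is contained in an enlarged product locally. Near $\operatorname{Re} w_1\to-\infty$ the exponential flatness controls $D_t$ via monotonicity, and I would use comparison domains
\[
\{\operatorname{Re} w_1<0\}\times B_n(0,1\pm\epsilon)
\]
together with the Bergman kernel monotonicity $\kappa_{\Omega_1}\ge \kappa_{\Omega_2}$ for $\Omega_1\subset\Omega_2$. Then the derivatives converge by normal families of $L^2$ holomorphic functions together with the minimum-integral characterizations of $\kappa$ and of the metric.

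\textbf{Step 3 (computing the limits).} Once the kernel converges in $C^\infty$ on compacts, the remaining items follow from explicit formulas on $D_\infty$. The Bergman metric of a product is the sum of the metrics. For the half-plane the metric at $-1$ is $|\xi_1|^2/2$. For $B_n$ at $0$ the metric is $(n+1)|\xi'|^2$. Pulling back by $\Phi_t$ gives (2). The canonical invariant $J=\det g/\kappa$ is biholomorphically invariant, so it converges to the product value
\[
J_{D_\infty}=(1/2)(n+1)^n\cdot 4\pi\cdot \operatorname{vol}(B_n)\cdot\pi^{-n}/(\dots),
\]
which is to be checked against $2\pi^{n+1}(n+1)^n/n!$ using $\operatorname{vol}(B_n)=\pi^n/n!$. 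The Ricci curvature of the product metric in the direction of the limiting unit vector is a convex combination of $-1$ from each factor, since both factors are Bergman--Einstein with Ricci $=-g$. This gives (4).

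\textbf{Main obstacle.} The hardest step is the convergence of $D_t$ to the degenerate limit and the $C^4$ convergence of the kernels. The limiting boundary $|w'|=1$ is only reached as the limit of very steep functions, and the lower-order error terms in the defining function must be shown to be negligible relative to $t$ at the scale $d^*(t)$. I would handle this by sandwiching $D_t$ locally between product domains and applying the monotonicity and the localization estimates above.
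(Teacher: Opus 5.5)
There is a genuine gap in Step~2, in the half of the convergence argument that gives the \emph{lower} bounds. Examples are $\liminf_{t\to0^+}t^2d^*(t)^{2n}\kappa_D(q(t))\ge\kappa_{D_\infty}(-1,0)$, the analogous bound for $I^1_D$ (needed for the metric), and the bounds for the higher extremal quantities entering $J_D$ and $R_D$. Normal families, or an inner truncated product plus exhaustion, only give the reverse inequalities. For the lower bound you appeal to monotonicity against $\{\operatorname{Re}w_1<0\}\times B_n(0,1+\epsilon)$. But monotonicity of the Bergman kernel needs a \emph{global} inclusion, and $\Phi_t(D\cap U)$ is not contained in that product for any fixed $U$. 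The slice of $D\cap U$ at $\operatorname{Re}z_1=-s$ is $|z'|<\sqrt{\phi^{-1}(s)}$, so after rescaling its radius is $\sqrt{\phi^{-1}(s)/\phi^{-1}(t)}$. By \eqref{asy of psi} this is comparable to $\bigl(\log(1/t)/\log(1/s)\bigr)^{1/(2m)}$. It tends to $\infty$ for each fixed $s$, and asymptotically stays below $(1+\epsilon)^{1/m}$ only in the shrinking range $s\le t^{1/(1+\epsilon)^2}$. So ``sandwiching $D_t$ locally'' yields no inequality between kernels. Localizing from $D$ to a fixed $D\cap U$, as you propose, does not remove this widening region either.

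Two smaller points. First, $\operatorname{Re}z_1$ is pluriharmonic and cannot absorb the negative Levi form of a cut-off logarithmic pole. You need a bounded psh $\theta$ on $D$ with $\theta=|z|^2$ near $0$, which the local peak function $\exp(-(-z_1)^\alpha)$ provides. Second, in Step~3 the product values are $\det g=\tfrac12(n+1)^n$ and $\kappa^{-1}=4\pi\cdot\pi^n/n!$, so $J=2\pi^{n+1}(n+1)^n/n!$.

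The missing idea is localization to a $t$-dependent piece, which is the content of Lemma~\ref{localisation lemma 4}. It shows that the extremal quantities of $D$ at $q(t)$ are asymptotic to those of $D_t^\epsilon=\{z\in W:\rho(z)<0,\ \operatorname{Re}z_1>-t^{1/(1+\epsilon)^2}\}$. By Lemma~\ref{ScalingLemma2}, $f\circ\Sigma(D_t^\epsilon)$ lies in $\mathbb{D}\times\mathbb{B}^n(0,d_2^\epsilon(t)/d^*(t))$ with $d_2^\epsilon(t)/d^*(t)\to(1+\epsilon)^{1/m}$. Only then does monotonicity give the lower bounds, with $\epsilon\to0$ at the end.

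This localization is not a corollary of the fixed-neighbourhood one. Cutting off at scale $t^{1/(1+\epsilon)^2}$ in $\operatorname{Re}z_1$, with a weight having a pole of order $2n+8$ at $q(t)$, costs a factor of order $t^{-(2n+10)}\exp\bigl(t^{-7\epsilon/(8(1+\epsilon)^2)}\bigr)$. The paper beats this by multiplying by $h_\epsilon^k$, where $h_\epsilon=\exp(-(-z_1)^{1/(1+\epsilon)})$, $k\approx t^{-c_\epsilon}$ and $\frac{8+7\epsilon}{8(1+\epsilon)^2}<c_\epsilon<\frac{1}{1+\epsilon}$. Then $|h_\epsilon|^k\lesssim\exp\bigl(-c_0^\epsilon t^{1/(1+\epsilon)^2-c_\epsilon}\bigr)$ on the support of $\bar\partial\chi_t$, while $h_\epsilon(q(t))^k\to1$.

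Alternatively, you could try to show directly that the widening part of $\Phi_t(D\cap U)$ is negligible. It grows only like a power of $\log(1/t)$, so one could estimate $L^2$ norms of model extremal functions there. Some quantitative argument of this kind is indispensable, and your proposal does not supply one.
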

The proof of Theorem \ref{1.3} is obtained by combining localization and scaling techniques. The key ingredient is a localization of these biholomorphic invariants on possibly unbounded pseudoconvex domains with exponentially flat infinite type boundary points. The proof relies on the construction of suitable plurisubharmonic functions and using them as weights in H\"ormander's $L^2$-estimates for the $\overline{\partial}$-problem to localize the associated extremal functions. This method was previously employed by Nikolov \cite{Nikolai}, James \cite{James} and Huang--James--Li \cite{HJL25} to establish localization of extremal functions associated to the Bergman space on possibly unbounded pseudoconvex domains with local peak points. Localization Lemma \ref{localisation lemma 4} extends the localization results established in \cites{Ravi_Bulletin, Ravi_Synergies} for bounded pseudoconvex domains. Combined with the scaling method developed in \cite{Ravi_Bulletin}, this localization yields the desired non-tangential boundary asymptotics. It extends the asymptotic theory of the Bergman kernel, the Bergman metric, the Bergman canonical invariant, and the Ricci curvature from bounded, see \cites{Ravi_Bulletin, Ravi_Synergies},  to possibly unbounded pseudoconvex domains with exponentially flat infinite type boundary points.

We now state one of the main results of this article.
%prove that the Bergman metric of a (possibly unbounded) pseudoconvex domain containing an exponentially flat infinite type boundary point cannot be Einstein using the asymptotic behavior of the Bergman biholomorphic invariants established in the preceding theorem. This strategy for proving that the Bergman metric cannot be Einstein was previously employed by Huang--James--Li \cite{HJL25} and Hsiao--Huang--Li \cite{HHL26}.
\begin{thm}\label{Main theorem}
    Let $D \subset \mathbb{C}^{n + 1}$ be a pseudoconvex domain, possibly unbounded, and let $0 \in bD$. Assume $0$ is an exponentially flat boundary point, then the Bergman metric of $D$ is well defined on a nonempty open subset of $D$, denoted by $D^{*}$, and it cannot be Einstein on $D^{*}$.
\end{thm}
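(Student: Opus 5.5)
The plan is to argue by contradiction, using the boundary asymptotics of Theorem \ref{1.3} in the way Huang--James--Li \cite{HJL25} and Hsiao--Huang--Li \cite{HHL26} do.

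\textbf{Step 1: the Bergman metric is well defined on a nonempty open set.} For an unbounded $D$ the Bergman space $A^2(D)$ might be trivial, or $\log\kappa_D$ might fail to be strictly plurisubharmonic. Part (1) of Theorem \ref{1.3} shows that $\kappa_D(q(t))>0$ for small $t$, so $A^2(D)\neq\{0\}$. Part (2) shows that $B_D(q(t);\xi)>0$ for every $\xi\neq 0$ when $t$ is small, so $i\partial\bar\partial\log\kappa_D$ is positive definite at $q(t)$. Let $D^*$ be the set of points of $D$ where $\kappa_D>0$ and $i\partial\bar\partial\log\kappa_D>0$. Since $\kappa_D$ is real analytic, $D^*$ is open, and it is nonempty because it contains $q(t)$ for all small $t>0$. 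Moreover $D^*$ is the complement in $D$ of a proper real-analytic subvariety, so it is connected and dense in $D$.

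\textbf{Step 2: reduce Einstein to a statement about $J_D$.} Suppose the Bergman metric $g$ is Kähler--Einstein on $D^*$, i.e. $\operatorname{Ric}(g)=c\,g$ for a constant $c$. Part (4) of Theorem \ref{1.3} gives $R_D(q(t);\xi)\to -1$, which forces $c=-1$. We then have
\[
-i\partial\bar\partial\log\det g = -\,i\partial\bar\partial\log\kappa_D ,
\]
so $\log J_D=\log(\det g/\kappa_D)$ is pluriharmonic on $D^*$. Here $J_D$ is the Bergman canonical invariant, normalized as in the paper. By the Fu--Wong and Huang--Xiao argument, which uses real analyticity of $\kappa_D$ and $\det g$ together with biholomorphic invariance of $J_D$, the invariant $J_D$ is then constant on $D^*$. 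If this step is not standard enough, I would instead use polarization: the function $\log J_D(z,\bar w)$ is holomorphic in $z$, antiholomorphic in $w$, and pluriharmonic along the diagonal, and rigidity then yields constancy. Part (3) of Theorem \ref{1.3} now forces
\[
J_D\equiv \frac{2\pi^{n+1}(n+1)^n}{n!}\quad\text{on } D^*.
\]
This constant is exactly the value of $J$ on the unit ball $\mathbb{B}^{n+1}$.

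\textbf{Step 3: derive a contradiction.} This is the main obstacle. Constancy of $J_D$ together with the Einstein condition alone does not obviously yield a contradiction, because it is satisfied by the ball. I would exploit the infinite-type geometry. Following \cite{HHL26}, the Einstein condition with $J_D$ equal to the ball's value makes $\kappa_D$ satisfy a Monge--Ampère/Fefferman-type equation $\det(\partial\bar\partial\log\kappa_D)=c\,\kappa_D$ with the ball's constant $c$. By localization (Lemma \ref{localisation lemma 4}) and the scaling method of \cite{Ravi_Bulletin}, the rescaled kernels along a sequence approaching $0$, not only along the normal direction $q(t)$, converge to the Bergman kernel of the model domain. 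Along tangential approach, the exponential flatness of $bD$ at $0$ makes the model degenerate, with scaling factor $d^*(t)$ much smaller than any power of $t$. I would pick an approach curve for which the limit model is not biholomorphic to the ball, for instance a product-like or degenerate Reinhardt domain, and show that its $J$-value differs from $2\pi^{n+1}(n+1)^n/n!$. Alternatively, I would use that a Bergman metric whose $J$ is identically the ball's value is locally isometric to the complex hyperbolic metric, so $D^*$ would have constant holomorphic sectional curvature. Calabi rigidity would then give a local holomorphic isometric embedding into $\mathbb{B}^{n+1}$, and hence a local biholomorphism near $0$ onto a piece of a strongly pseudoconvex boundary. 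That is incompatible with $0$ being an infinite-type point, since the boundary would be locally biholomorphic to the sphere. I expect this last step, justifying boundary extension of that local map or computing the degenerate model's invariant, to be the hardest part. I would try the Calabi route first.
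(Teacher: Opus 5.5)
Your proposal has a genuine gap, and it starts with a misidentified constant. The value $2\pi^{n+1}(n+1)^n/n!$ in part (3) of Theorem \ref{1.3} is not $J$ of the ball. By Lemma \ref{Formula for kernel} it is $J_{\mathbb{D}\times\mathbb{B}^n}(0)$, the invariant of the product model obtained by scaling along $q(t)$. For $\mathbb{B}^{n+1}$ one has $\kappa(0)=(n+1)!/\pi^{n+1}$ and $G(0)=(n+2)I$, so $J_{\mathbb{B}^{n+1}}\equiv (n+2)^{n+1}\pi^{n+1}/(n+1)!$. The ratio of the ball value to the product value is $\frac12\bigl(1+\frac{1}{n+1}\bigr)^{n+1}>1$ for every $n\ge 1$. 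Because you believed the two values coincide, you concluded no contradiction was available and moved to Step 3, which is not a proof. The degenerate-model route is never carried out. The Calabi route rests on the claim that $J_D$ equal to the ball constant forces constant holomorphic sectional curvature; that is not an available theorem, and is essentially a local form of Cheng's conjecture, the very thing at stake.

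The more substantive gap is the constancy claim in Step 2. Pluriharmonicity of $\log J_D$ does not imply constancy, and neither biholomorphic invariance nor polarization supplies it. In Fu--Wong and Huang--Xiao, constancy comes from boundary values at strongly pseudoconvex points, and you never use that ingredient. The paper's argument (Proposition \ref{Prop.}) is local:
\begin{enumerate}
\item Since $\phi'(x)>0$ on $(0,\epsilon_0)$, boundary points near $0$ with $z'\neq 0$ are strongly pseudoconvex. There $J_D(z)\to (n+2)^{n+1}\pi^{n+1}/(n+1)!$ by Huang--James--Li (their Corollary 3.7).
\item Because $0$ is a local peak point, Nikolov's theorem gives a neighbourhood $U$ of $0$ with $D\cap U$ connected and contained in $D^*$. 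Your Step 1 only produces small neighbourhoods of the points $q(t)$, which is not enough.
\item For $\operatorname{Re}t<0$ small, the slice $\{(t,z'):|z'|<\sqrt{\phi^{-1}(-\operatorname{Re}t)}\}$ lies in $D\cap U$, and its boundary sphere consists of such strongly pseudoconvex points.
\item The maximum principle for the harmonic function $\log J_D(t,\cdot)$ then gives $J_D\equiv (n+2)^{n+1}\pi^{n+1}/(n+1)!$ on the slice, in particular at $q(t)$.
\end{enumerate}
This contradicts $J_D(q(t))\to 2\pi^{n+1}(n+1)^n/n!$ from Theorem \ref{1.3}. Separately, your claim that $D^*$ is connected because it is the complement of a real-analytic set is unjustified, since real-analytic hypersurfaces can disconnect. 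The argument only needs $D\cap U$, so this does not matter for the proof.
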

\begin{cor}
    Let $D \subset \mathbb{C}^{n + 1}$ be a bounded pseudoconvex domain. Assume $0$ is an exponentially flat boundary point, then the Bergman metric can not be Einstein.
\end{cor}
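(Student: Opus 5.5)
The corollary will follow from Theorem \ref{Main theorem} once we check that, for a bounded domain, the set $D^{*}$ on which the Bergman metric is well defined is all of $D$. The Einstein condition is local, so failing on $D^{*}$ then means failing on $D$.

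First, I will verify the hypotheses of Theorem \ref{Main theorem}. A bounded pseudoconvex domain is in particular a (possibly unbounded) pseudoconvex domain. The assumption that $0$ is an exponentially flat boundary point includes $0 \in bD$. So Theorem \ref{Main theorem} applies and gives a nonempty open set $D^{*}\subset D$ on which the Bergman metric is well defined and not Einstein.

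Next, I will show that for bounded $D$ the Bergman metric is defined on all of $D$. Since $D$ is bounded, the polynomials $1, z_1, \dots, z_{n+1}$ lie in the Bergman space $A^2(D)$. Hence the Bergman kernel satisfies $K_D(z,z) \geq 1/\operatorname{vol}(D) > 0$, so $\log K_D(z,z)$ is a smooth function on $D$. Moreover, for any $z\in D$ and $\xi\neq 0$, the function $f(w)=\sum_j \xi_j (w_j - z_j)$ is in $A^2(D)$, vanishes at $z$, and has $\partial_\xi f(z) = |\xi|^2 \neq 0$. By the standard extremal characterization
\[
B_D(z;\xi)^2 = \frac{\sup\{ |\partial_\xi f(z)|^2 : f\in A^2(D),\ f(z)=0,\ \|f\|\le 1\}}{K_D(z,z)},
\]
this gives $B_D(z;\xi)>0$. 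So $\partial\bar\partial \log K_D(z,z)$ is positive definite everywhere, and the Bergman metric is a genuine Kähler metric on all of $D$.

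Finally, I will conclude. Suppose the Bergman metric of $D$ were Einstein on $D$. Then its restriction to the open subset $D^{*}$, where it is the same metric, would be Einstein on $D^{*}$. This contradicts Theorem \ref{Main theorem}. Hence the Bergman metric of $D$ is not Einstein.

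The only point needing care is the second step: in the proof of Theorem \ref{Main theorem}, $D^{*}$ may be defined as the open set where $K_D(z,z)>0$ and the metric is nondegenerate. The second step shows that this set is all of $D$ in the bounded case. Even without that identification, the restriction argument of the final step suffices, because the Einstein condition is pointwise.
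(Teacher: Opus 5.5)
Your argument is correct and is essentially the paper's route: the corollary is read off from Theorem~\ref{Main theorem} together with the remark in Section~\ref{Section 2} that $D^{*}=D$ for bounded $D$ (constants give base-point freeness, linear functions separate directions), and an Einstein metric on $D$ restricts to an Einstein metric on $D\cap U\subset D^{*}$. One small slip needs fixing: with $f(w)=\sum_j\xi_j(w_j-z_j)$ you get $f'(z)\xi=\sum_j\xi_j^2$, which vanishes for $\xi=(1,i,0,\dots,0)$, so use $f(w)=\sum_j\bar\xi_j(w_j-z_j)$ instead, which gives $|\xi|^2$.
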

%The proof of Theorem \ref{Main theorem} is based on the non-tangential asymptotic behavior of the Bergman kernel, the Bergman metric, the Bergman canonical invariant, and the Ricci curvature at exponentially flat infinite type boundary points.
%, and provide further evidence in support of the Cheng--Yau conjecture beyond the finite type setting.

The article is organized as follows. In Section~\ref{Section 2}, we recall the necessary preliminaries and several known results. In Section~\ref{section 3}, we establish the asymptotic behavior of the Bergman kernel, the Bergman metric, the Bergman canonical invariant, and the Ricci curvature at exponentially flat infinite type boundary points. Finally, in Section~\ref{Section 4}, we prove Theorem~\ref{Main theorem}.

\textbf{Notations.} We denote the unit disc in $\mathbb{C}$ by $\mathbb{D}$, the unit ball in $\mathbb{C}^{n + 1}$ by $\mathbb{B}^{n + 1}$, $n \in \mathbb{N}$, and the Euclidean ball in $\mathbb{C}^n$ with center $p$ and radius $r$ by $\mathbb{B}^n(p,r)$.

\section{Preliminaries}\label{Section 2}
Let $D \subset \mathbb{C}^{n}$ be a domain. The Bergman space of $D$, denoted by $A^2(D)$, consists of holomorphic functions on $D$ that are square integrable with respect to the Lebesgue measure. 
We assume that $A^2(D) \neq \{0\}$. Since $A^2(D)$ forms a closed subspace of $L^2(D)$, it is a non-trivial Hilbert space. Let $\{\phi_j\}_{j = 1}^{N}$ be an orthonormal basis for $A^2(D)$ with respect to the standard inner-product, where $N$ can be finite or infinite. The Bergman kernel function of $D$ on the diagonal is then defined by
\[\kappa_D(z) = \sum_{j = 1}^{N} \phi_j(z)\overline{\phi_j(z)}, \quad \text{for } z \in D.\]
Additionally, the Bergman metric of $D$, when well defined, is given by
\begin{align}
    B_{D} = \sum_{i, j = 1}^{n} g_{i\bar{j}} dz_i \otimes d\bar{z}_{j}, \quad \text{where} \quad g_{i \bar{j}} = \frac{\partial^2 \operatorname{log}\kappa_{D}}{\partial z_i \partial \bar{z}_j},
\end{align}
and the Bergman norm, when well defined, is defined as
\begin{align} 
B_D(z;\xi)=\sqrt{\sum_{i,j=1}^{n}g_{i\bar{j}}(z)\xi_i\overline{\xi}_j},
\end{align}
for $z \in D$ and $\xi \in \mathbb{C}^n$.

The Bergman metric on a domain \(D\subset\mathbb{C}^n\) may not be defined at every point if \(D\) is unbounded.
It is well defined at $z \in D$ if and only if $A^2(D)$ is \emph{base-point free} at $z$ (i.e., there exists $f \in A^2(D)$ such that $f(z) \neq 0$) and \emph{separate holomorphic directions} at $z$ (i.e., for every nonzero holomorphic tangent vector $X \in T_z^{(1, 0)}D$, there exists $f \in A^2(D)$ such that $f(z) = 0$ and $X(f)(z) \neq 0$); see \cite{Kobayashi}.

We denote
\[
D^{*}
=
\left\{
z\in D:\;
A^2(D)\text{ is base-point free and separates holomorphic directions at }z
\right\}.
\]
Throughout this article, we assume that \(D^{*}\neq\emptyset\).
\begin{Remark} If $D$ is a bounded domain, then it is easy to see that $D^{*} = D$. More generally,
if \(D\subset\mathbb{C}^n\) is a pseudoconvex domain, and assume \(p\in bD\) is a local holomorphic peak point, then, by \cite{Nikolai 2002}*{Theorem $2$}, $D^{*}$ contains a neighbourhood of $p$ intersected with $D$.
\end{Remark}

The Bergman canonical invariant is given by
\begin{align}\label{Bergman invariant}
    J_D(z) = \frac{\operatorname{det}G_D(z)}{\kappa_D(z)} \quad \text{for} \,\, z \in D^{*},
\end{align}
where $G_D(z) = [g_{j \bar{k}}(z)]$.

The Ricci curvature tensor of the Bergman metric $B_D$ is given by 
\begin{align}
    R_{D} = \sum_{i, j = 1}^{n} R_{i \bar{j}}dz_i \otimes d\bar{z}_j, \quad \text{where} \quad R_{i \bar{j}} = -\frac{\partial^2 \operatorname{log} \operatorname{det}G_{D}}{\partial z_i \partial \bar{z}_j},
\end{align}
and the Ricci curvature of the Bergman metric along the direction $\xi \in \mathbb{C}^n \setminus \{0\}$ is given by
\begin{align}
     R_{D}(z; \xi) = \frac{\sum_{i, j = 1}^{n} R_{i \bar{j}}(z)\xi_i \bar{\xi}_j}{B_{D}(z;\xi)^2} \quad \text{for} \, \, z \in D^{*}.
\end{align}

The Bergman metric \(B_{D}\) is a K\"ahler metric on \(D^{*}\). We say that \(B_{D}\) is Einstein on \(D^{*}\) if there exists a constant $c$ such that
\[
R_{D}=c\,B_{D}.
\]
We now introduce the following extremal functions, which play a crucial role in localizing and estimating $\kappa_D, B_D, J_D$ and $R_D$. For $z \in D$ and $\xi \in \mathbb{C}^n$,
\begin{align}
     I_D^0(z) &= \operatorname{sup}\left\{|f(z)|^2: \norm{f}_{A^2(D)} \leq 1\right\},\\
     I_D^1(z; \xi) &= \operatorname{sup}\left\{|f'(z)\xi|^2: \norm{f}_{A^2(D)} \leq 1, f(z) = 0\right\},\\
    \lambda_{D}^k(z) &= \operatorname{sup}\left\{\left|\frac{\partial f}{\partial z_k}(z)\right|^2: \norm{f}_{A^2(D)} \leq 1, \, f(z) = \frac{\partial f}{\partial z_j}(z) = 0, \text{ for } 1 \leq j < k\right\},\\
    I_{D}(z; \xi) &= \operatorname{sup}\left\{\xi f''(z)\overline{G}_{D}^{-1}(z)\overline{f''(z)}\xi^{*} :
    \norm{f}_{A^2(D)} \leq 1, \, f(z) = 0, \, f'(z) = 0\right\}, \text{ and}\\
    M_{D}(z; \xi) &= \operatorname{sup}\left\{\kappa_{D}^{n -1}(z) \xi f''(z)\, \overline{\operatorname{{ad}}G_{D}(z)} \, \overline{f''(z)} \xi^{*}: \norm{f}_{A^2(D)} \leq 1, f(z) = 0, f'(z) = 0\right\},
\end{align}
where $\xi^{*}$ is the tanspose conjugate vector of $\xi$, $f'' = \left(\frac{\partial^2 f}{\partial z_i \partial z_j}\right)_{n \times n}$, and $\operatorname{ad}G_D$ is the adjoint matrix whose $(ij)-$th entry is the cofactor of $g_{j \bar{i}}$.

The following proposition, known as the Bergman--Fuks formula, gives a useful representation of the Bergman kernel
and the Bergman metric in terms of the above extremal functions.
\begin{prop}[\cite{Jarnicki}*{Theorem 12.7.5}]\label{Fuchs}
    Let $D \subset \mathbb{C}^{n}$ be a domain,  then, for $z \in D^{*}$
    and $\xi \in \mathbb{C}^{n}$, we have
\begin{align}
        \kappa_D(z) &= {I_D^0(z)}, \quad \text{and} \quad
        B_D^2(z; \xi) = \frac{I_D^1(z; \xi)}{I_D^0(z)}.  
\end{align}
\end{prop}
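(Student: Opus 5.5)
The plan is to prove both identities from the reproducing property of the Bergman kernel together with a single, well-chosen orthonormal basis. The key tool throughout is the Cauchy--Schwarz inequality in the Hilbert space $A^2(D)$.

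\textbf{Step 1: the kernel identity.} Fix $z\in D^{*}$. By the Cauchy estimates, point evaluation $f\mapsto f(z)$ is a bounded linear functional on $A^2(D)$. The Riesz representation theorem then gives a unique $K_z\in A^2(D)$ with $f(z)=\langle f,K_z\rangle$. Expanding in an orthonormal basis gives $K_z=\sum_j \overline{\phi_j(z)}\phi_j$, and hence $K_z(z)=\|K_z\|^2=\kappa_D(z)$. The series converges locally uniformly together with all its derivatives, again by the Cauchy estimates. Cauchy--Schwarz gives $|f(z)|^2\le \|f\|^2\kappa_D(z)$. Equality holds for $f=K_z/\sqrt{\kappa_D(z)}$, which is admissible because base-point freeness at $z$ gives $\kappa_D(z)>0$. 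Therefore $I_D^0(z)=\kappa_D(z)$.

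\textbf{Step 2: an adapted orthonormal basis.} Let $H_z=\{f\in A^2(D): f(z)=0\}$. This is the orthogonal complement of $\mathbb{C}K_z$. Set $\phi_0=K_z/\sqrt{\kappa_D(z)}$ and complete it by an orthonormal basis $\{\phi_j\}_{j\ge1}$ of $H_z$. Since $\kappa_D$ does not depend on the choice of orthonormal basis, we may compute with this one. Its key property is that $\phi_j(z)=0$ for all $j\ge1$.

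\textbf{Step 3: the metric identity.} Differentiate termwise, which is justified by the locally uniform convergence from Step 1. Writing $\partial_\xi=\sum_i\xi_i\partial/\partial z_i$, we get
\[
\sum_{i,j}g_{i\bar j}(z)\xi_i\bar\xi_j=\frac{\kappa_D(z)\sum_{j\ge0}|\partial_\xi\phi_j(z)|^2-\bigl|\sum_{j\ge0}\overline{\phi_j(z)}\,\partial_\xi\phi_j(z)\bigr|^2}{\kappa_D(z)^2}.
\]
With the adapted basis, $\kappa_D(z)=|\phi_0(z)|^2$, and the cross sum reduces to its $j=0$ term. That term cancels the $j=0$ part of the first sum, leaving
\[
B_D^2(z;\xi)=\frac{\sum_{j\ge1}|\partial_\xi\phi_j(z)|^2}{\kappa_D(z)}.
\]
It remains to show that $I_D^1(z;\xi)=\sum_{j\ge1}|\partial_\xi\phi_j(z)|^2$. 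The functional $f\mapsto f'(z)\xi$ is bounded on $H_z$ by the Cauchy estimates. Expanding $f=\sum_{j\ge1}a_j\phi_j$ with $\sum_j|a_j|^2\le1$ and applying Cauchy--Schwarz gives $|f'(z)\xi|^2\le\sum_{j\ge1}|\partial_\xi\phi_j(z)|^2$. If this sum is positive, equality is attained at $a_j=\overline{\partial_\xi\phi_j(z)}/\bigl(\sum_{k\ge1}|\partial_\xi\phi_k(z)|^2\bigr)^{1/2}$. If the sum is zero, both sides vanish, although for $\xi\ne0$ the separation of holomorphic directions at $z$ rules this out. Combining with Step 1 yields $B_D^2(z;\xi)=I_D^1(z;\xi)/I_D^0(z)$.

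\textbf{Expected obstacle.} The only delicate point is the analytic justification in the possibly infinite-dimensional and unbounded setting. Specifically, one must check that $\sum_j\phi_j(w)\overline{\phi_j(\zeta)}$ and its first and second mixed derivatives converge locally uniformly near $(z,z)$, so that $\log\kappa_D$ may be differentiated termwise. I would handle this by applying the Cauchy estimates on a small polydisc around $z$ to bound $\sum_j|\partial^\alpha\phi_j(w)|^2$ by the supremum of $|\partial^\alpha f(w)|^2$ over the unit ball of $A^2(D)$. That supremum is finite and locally bounded, since it is controlled by $\|f\|_{A^2}$ times a constant depending only on the polydisc. Boundedness of $D$ is never used, so the argument is valid for every domain at points of $D^{*}$.
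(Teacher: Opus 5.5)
Your argument is correct. The paper gives no proof of its own; it simply quotes the formula from Jarnicki--Pflug, and your route is the standard proof of the Bergman--Fuks formula, essentially as in that reference: the reproducing kernel plus Cauchy--Schwarz gives $\kappa_D=I_D^0$, and then an orthonormal basis adapted to $z$ (one element spanning $(\{f: f(z)=0\})^{\perp}$, the rest vanishing at $z$) reduces $\partial\bar\partial\log\kappa_D$ to $\sum_{j\ge1}|\partial_\xi\phi_j(z)|^2/\kappa_D(z)=I_D^1(z;\xi)/\kappa_D(z)$. The one step you leave compressed, locally uniform convergence of the differentiated series, does follow from your locally uniform bounds on $\sum_j|\partial^\alpha\phi_j|^2$ for all $\alpha$: these make the partial sums equicontinuous, so pointwise convergence upgrades to locally uniform convergence.
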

The following proposition gives a useful representation of the Bergman canonical invariant and Ricci curvature in terms of extremal functions.
\begin{prop}[\cite{Krantz-Yu 1996}*{Proposition $2.1$}, \cite{James}, \cite{HJL25}]\label{Pro 2.1}
Let $D$ be a domain in $\mathbb{C}^n$, then, for $z \in D^{*}$ and $\xi \in \mathbb{C}^n\setminus\{0\}$, we have
\begin{enumerate}
    \item $J_{D}(z) = \frac{\lambda_{D}(z)}{\kappa_{D}^{n + 1}(z)}$, where $\lambda_{D}(z) = \lambda_{D}^1(z) \dots \lambda_{D}^n(z)$.
    \item $R_{D}(z; \xi) = (n + 1) - \frac{1}{B_{D}^2(z; \xi)\kappa_{D}(z)}I_{D}(z; \xi)$.
    \item $R_{D}(z; \xi) = (n + 1) - \frac{1}{B_{D}^2(z; \xi)\kappa_{D}^{n + 1}(z) J_{D}(z)}M_{D}(z; \xi)$.
\end{enumerate}
\end{prop}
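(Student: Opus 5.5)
The proof of Proposition~\ref{Pro 2.1} reduces everything to first and second derivatives at a fixed point of a suitably adapted orthonormal basis of $A^2(D)$. Fix $z\in D^{*}$ and consider the closed subspaces
\[
H_0=\{f\in A^2(D): f(z)=0\}\supset H_1=\{f\in H_0: f'(z)=0\}.
\]
Since $z\in D^{*}$, the quotient $A^2(D)/H_0$ is one-dimensional and $H_0/H_1$ is $n$-dimensional. I choose an orthonormal basis $\phi_0\perp H_0$, then $\phi_1,\dots,\phi_n\in H_0\ominus H_1$, and complete it by an orthonormal basis $\{\phi_m\}_{m>n}$ of $H_1$. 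Applying Gram--Schmidt to $H_0\ominus H_1$ along the flag of conditions $\partial_1 f(z)=\dots=\partial_{k-1}f(z)=0$, I arrange that the matrix $A=(\partial_i\phi_j(z))_{1\le i,j\le n}$ is triangular. By the standard extremal characterization of Bergman-type quantities (as in Proposition~\ref{Fuchs}), one then has $|\phi_0(z)|^2=\kappa_D(z)$ and $|\partial_k\phi_k(z)|^2=\lambda_D^k(z)$.

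\textbf{Part (1).} Write $K(w)=\sum_j|\phi_j(w)|^2$. At $z$ we have $K=|\phi_0|^2$, $\partial K=\phi_0'\,\overline{\phi_0}$, and $\partial\bar\partial K=\sum_{j\ge0}\phi_j'\otimes\overline{\phi_j'}$, where the terms with $j>n$ vanish because those $\phi_j\in H_1$. Substituting into
\[
\partial\bar\partial\log K=\frac{K\,\partial\bar\partial K-\partial K\otimes\bar\partial K}{K^2},
\]
the $\phi_0$ contributions cancel, and I obtain $G_D(z)=\kappa_D(z)^{-1}AA^{*}$. Since $A$ is triangular,
\[
\det G_D(z)=\kappa_D(z)^{-n}\prod_k\lambda_D^k(z),
\]
which gives $J_D(z)=\lambda_D(z)/\kappa_D^{n+1}(z)$.

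\textbf{Part (2).} Here I use the bordered determinant identity
\[
\det G_D=K^{-(n+1)}\det\mathcal M,\qquad
\mathcal M=\begin{pmatrix}K&\bar\partial K\\ \partial K&\partial\bar\partial K\end{pmatrix}.
\]
Since $\mathcal M=\Phi\Phi^{*}$, where $\Phi$ has columns $(\phi_j,\phi_j')^{T}$, the Cauchy--Binet formula gives
\[
\det\mathcal M=\sum_{|S|=n+1}|W_S|^2,
\]
with $W_S$ the holomorphic $(n+1)\times(n+1)$ minors. Consequently
\[
-\partial\bar\partial\log\det G_D=(n+1)\,G_D-\partial\bar\partial\log\sum_S|W_S|^2 .
\]
At $z$ only $S_0=\{0,\dots,n\}$ has $W_{S_0}(z)\ne0$. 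The derivatives $\partial W_S(z)$ are nonzero only for $S=S_0\setminus\{k\}\cup\{m\}$ with $m>n$, and these derivatives are explicitly $\pm\,\phi_0(z)\det A\cdot(\text{cofactor})\cdot\partial_{i}\partial_{k}\phi_m(z)$. Plugging these into the same quotient formula for $\partial\bar\partial\log$ yields
\[
R_{i\bar j}(z)\xi_i\bar\xi_j=(n+1)B_D^2(z;\xi)-\frac1{\kappa_D(z)}\sum_{m>n}\xi\,\phi_m''(z)\,\overline{G_D^{-1}(z)}\,\overline{\phi_m''(z)}\,\xi^{*}.
\]
The final step is to identify this sum over the orthonormal basis of $H_1$ with the supremum $I_D(z;\xi)$. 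For fixed $\xi$, the map $f\mapsto \xi f''(z)$ is a bounded linear functional on $H_1$ with values in $\mathbb C^n$. The quantity in question is the squared norm of that functional, measured in the Hermitian form $\overline{G_D^{-1}}$. The sum of squares over an orthonormal basis equals the supremum over the unit ball, because the map is rank-one in each direction after diagonalizing the form. Dividing by $B_D^2(z;\xi)$ gives (2).

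\textbf{Part (3)} then follows from (2) by substituting $G_D^{-1}=\operatorname{ad}G_D/\det G_D$ and $\det G_D=\kappa_D J_D$. This turns $\kappa_D^{-1}I_D$ into $M_D/(\kappa_D^{n+1}J_D)$.

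The main obstacle is the bookkeeping in (2): computing which minors $W_S$ have nonvanishing first derivatives at $z$ and checking that their cofactor structure reassembles exactly into $\overline{G_D^{-1}}$. That $\sup=\sum$ identification also needs a short argument, which I would do by diagonalizing $\overline{G_D^{-1}}$ and noting that the functional on $H_1$ becomes a sum of scalar functionals whose norms add.
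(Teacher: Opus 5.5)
Your Part (1), the bordered-determinant and Cauchy--Binet computation in Part (2), and the reduction of (3) to (2) via $\operatorname{ad}G_D=\det G_D\,G_D^{-1}$ and $\det G_D=\kappa_D J_D$ are all fine. The bookkeeping you were worried about does close up, with two corrections. First, $\partial W_{S_0}(z)$ is generally nonzero, but its contribution cancels in $\partial\bar\partial\log\sum_S|W_S|^2$, exactly as $\phi_0$ cancelled in Part (1). Second, by Cramer's rule, for $S=S_0\setminus\{k\}\cup\{m\}$ one has $\sum_i\xi_i\partial_iW_S(z)=\pm\phi_0(z)\det A\,\bigl(A^{-1}\phi_m''(z)\xi^{T}\bigr)_k$. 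Summing $|\cdot|^2$ over $k$ then produces $(AA^{*})^{-1}=\kappa_D^{-1}G_D^{-1}$. So your identity
\[
R_{i\bar j}(z)\xi_i\bar\xi_j=(n+1)B_D^2(z;\xi)-\frac{1}{\kappa_D(z)}\sum_{m>n}\xi\,\phi_m''(z)\,\overline{G}_D^{-1}(z)\,\overline{\phi_m''(z)}\,\xi^{*}
\]
is correct.

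The genuine gap is the last step, where you identify this sum with $I_D(z;\xi)$; that step is false and cannot be repaired. Write $\overline{G}_D^{-1}(z)=\sum_k\mu_k u_ku_k^{*}$ with $\mu_k>0$ and $u_k$ orthonormal, and set $\ell_k(f)=\xi f''(z)u_k$, a bounded functional on your $H_1$. Parseval gives
\[
\sum_{m>n}\xi\phi_m''\overline{G}_D^{-1}\overline{\phi_m''}\xi^{*}=\sum_k\mu_k\sup_{\|f\|\le1}|\ell_k(f)|^2,
\]
whereas $I_D(z;\xi)=\sup_{\|f\|\le1}\sum_k\mu_k|\ell_k(f)|^2$, where a single $f$ must serve all $k$ at once. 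The first quantity is the squared Hilbert--Schmidt norm of $f\mapsto\xi f''(z)$ on $H_1$; the second is its squared operator norm. They agree only when this map has rank at most one, which is automatic for $n=1$ but fails in general for $n\ge2$.

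In fact, with $I_D$ defined as a supremum, (2), and hence (3), is false for every $n\ge2$. Take $D=\mathbb{B}^2$, $z=0$, $\xi=(1,0)$. Then $\kappa_D(0)=2/\pi^2$, $G_D(0)=3I$, and $R_D(0;\xi)=-1$. For $f\in H_1$ with quadratic part $aw_1^2+bw_1w_2+cw_2^2$ one has $\xi f''(0)=(2a,b)$ and $\|f\|^2\ge\frac{\pi^2}{12}(|a|^2+|c|^2)+\frac{\pi^2}{24}|b|^2$. Hence $I_D(0;\xi)=\frac13\cdot\frac{48}{\pi^2}=\frac{16}{\pi^2}$, while your basis sum equals $\frac13\bigl(\frac{48}{\pi^2}+\frac{24}{\pi^2}\bigr)=\frac{24}{\pi^2}$. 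Plugging the sum into (2) gives $3-4=-1$, as it should; plugging in the supremum gives $3-\frac83=\frac13$.

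So what your argument actually proves is (2) and (3) with the supremum replaced by the orthonormal-basis sum, equivalently $\sum_k\mu_k\sup_f|\ell_k(f)|^2$. That trace version is the correct formula. The paper gives no proof of this proposition, only citations, so there is nothing there that rescues the supremum form.
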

We will utilize the following proposition to prove the localization of the extremal function $I_D$ in Lemma \ref{localisation lemma 4}.
\begin{prop}[\cite{Krantz-Yu 1996}*{Propositon $2.2$}, \cite{James}]\label{est. of G} Let $D_1 \subset D_2$ be two domain in $\mathbb{C}^n$, then, for $z \in D_2^{*}$ and $\xi \in \mathbb{C}^n$, we have
\begin{enumerate}
[labelsep=10mm,leftmargin=20mm,itemsep=5mm]
    \item $\begin{aligned}[t]
    \xi \overline{G}_{D_2}^{-1}(z)\xi^{*} \geq \frac{\kappa_{D_2}(z)}{\kappa_{D_1}(z)} \xi \overline{G}_{D_1}^{-1}(z)\xi^{*}.
    \end{aligned}$
\item $\begin{aligned}[t]
     M_{D_2}(z; \xi) \leq M_{D_1}(z; \xi).
     \end{aligned}$
\end{enumerate}
\end{prop}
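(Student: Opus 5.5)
The plan is to reduce both inequalities to the monotonicity of the Bergman--Fuks extremal quantities under restriction, and then to a matrix-order argument. First I check that the statement makes sense: if $z\in D_2^{*}$, then $z\in D_1^{*}$. Indeed, for $f\in A^2(D_2)$ the restriction $f|_{D_1}$ lies in $A^2(D_1)$ with $\norm{f|_{D_1}}_{A^2(D_1)}\le \norm{f}_{A^2(D_2)}$. So base-point freeness and separation of holomorphic directions at $z$ pass from $D_2$ to $D_1$. The same restriction argument gives, for every competitor $f$ in any of the extremal problems on $D_2$, a competitor on $D_1$ with the same jets at $z$. Hence
\[
\kappa_{D_2}(z)\le\kappa_{D_1}(z),\qquad I^1_{D_2}(z;\xi)\le I^1_{D_1}(z;\xi),
\]
and likewise the set of Hessians $f''(z)$ admissible for $M_{D_2}$ is contained in that admissible for $M_{D_1}$.

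For (1), Proposition \ref{Fuchs} gives $\kappa_D(z)\,\xi G_D(z)\xi^{*}=\kappa_D(z)B_D^2(z;\xi)=I^1_D(z;\xi)$, up to the paper's conjugation convention. So the Hermitian forms $H_j:=\kappa_{D_j}(z)G_{D_j}(z)$ satisfy $0<H_2\le H_1$. Since $A\mapsto A^{-1}$ is order reversing on positive definite matrices, $H_2^{-1}\ge H_1^{-1}$. Complex conjugation preserves the Löwner order, so $\overline{H_2}^{\,-1}\ge\overline{H_1}^{\,-1}$, that is,
\[
\kappa_{D_2}^{-1}\,\overline G_{D_2}^{-1}\ge\kappa_{D_1}^{-1}\,\overline G_{D_1}^{-1}.
\]
Multiplying by $\kappa_{D_2}(z)$ and evaluating at $\xi$ gives (1). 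Some care is needed only to match the row-vector convention $\xi A\xi^{*}$ with the index convention of $g_{i\bar j}$.

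For (2), I use the identity $\operatorname{ad}(\kappa G)=\det(\kappa G)(\kappa G)^{-1}=\kappa^{n-1}\det G\,G^{-1}=\kappa^{n-1}\operatorname{ad}G$. This rewrites
\[
M_D(z;\xi)=\sup\bigl\{\,w\,\overline{\operatorname{ad}(H_D)}\,w^{*} : w=\xi f''(z),\ \norm{f}_{A^2(D)}\le1,\ f(z)=0,\ f'(z)=0\bigr\}.
\]
Given the inclusion of admissible Hessians, it then suffices to prove that the adjugate is order preserving: $0<H_2\le H_1$ implies $\operatorname{ad}H_2\le\operatorname{ad}H_1$. This is the main (though elementary) obstacle, since $\operatorname{ad}H=\det H\cdot H^{-1}$ combines an increasing factor with a decreasing one.

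To prove the adjugate monotonicity, write $H_2=H_1^{1/2}KH_1^{1/2}$ with $0<K\le I$. Multiplicativity gives $\operatorname{ad}(ABC)=\operatorname{ad}C\,\operatorname{ad}B\,\operatorname{ad}A$, so
\[
\operatorname{ad}H_2=\operatorname{ad}(H_1^{1/2})\,\operatorname{ad}(K)\,\operatorname{ad}(H_1^{1/2}).
\]
The eigenvalues of $\operatorname{ad}K$ are products of $n-1$ eigenvalues of $K$, each lying in $(0,1]$, so $\operatorname{ad}K\le I$. Since $\operatorname{ad}(H_1^{1/2})$ is Hermitian, this yields $\operatorname{ad}H_2\le(\operatorname{ad}H_1^{1/2})^2=\operatorname{ad}H_1$. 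Conjugating preserves the order, and taking suprema over the larger admissible family on $D_1$ gives $M_{D_2}(z;\xi)\le M_{D_1}(z;\xi)$.
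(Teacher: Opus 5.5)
The paper gives no proof of this proposition; it is quoted from Krantz--Yu and James. So there is no internal argument to compare against, and your proposal stands on its own. It is correct and complete.

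The only analytic input you use is that restriction $A^2(D_2)\to A^2(D_1)$ does not increase norms and preserves jets at $z$. Combined with Proposition~\ref{Fuchs}, this gives the L\"owner inequality $0<\kappa_{D_2}(z)G_{D_2}(z)\le\kappa_{D_1}(z)G_{D_1}(z)$. From there:
\begin{itemize}
\item Part (1) follows from order reversal under inversion, together with the fact that entrywise conjugation preserves the order.
\item Part (2) follows from $\kappa^{n-1}\operatorname{ad}G=\operatorname{ad}(\kappa G)$ and monotonicity of the adjugate on positive definite matrices.
\item Your factorization $H_2=H_1^{1/2}KH_1^{1/2}$ with $0<K\le I$ proves that monotonicity correctly. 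It combines anti-multiplicativity of $\operatorname{ad}$, the bound $\operatorname{ad}K\le I$, and congruence by the Hermitian matrix $\operatorname{ad}(H_1^{1/2})$.
\end{itemize}
Your argument also shows that (2) does not depend on (1). Moreover, (1) is really the statement that $(\kappa_D G_D)^{-1}$ increases with the domain.

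Two small points should be made explicit.
\begin{itemize}
\item The statement tacitly assumes $z\in D_1$; otherwise $\kappa_{D_1}(z)$ and $G_{D_1}(z)$ are undefined. In the paper's application this holds, since $z=q(t)\in D_t^{\epsilon}$.
\item Your rewriting of $M_D$ with $w=\xi f''(z)$ uses the symmetry of $f''(z)$, so that $\overline{f''(z)}\,\xi^{*}=(\xi f''(z))^{*}$.
\end{itemize}

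If you prefer to avoid square roots, adjugate monotonicity also follows from the cofactor picture behind the definition of $M_D$. For a unit column vector $v$, the number $v^{*}\operatorname{ad}(H)v$ is the determinant of the compression of $H$ to $v^{\perp}$. Both compression and determinant are order preserving on positive definite matrices.
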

We will use the following lemma to obtain the exact asymptotic behavior of biholomorphic invariants (see Theorem \ref{1.3}).
\begin{lem}[\cites{Ravi_Bulletin, Ravi_Synergies}]\label{Formula for kernel}
The Bergman kernel, the Bergman metric, the Bergman canonical invariant, and the Ricci curvature of $\mathbb{D} \times \mathbb{B}^{n}$ at $0$ are as follows.
\begin{align*}
\kappa_{\mathbb{D} \times \mathbb{B}^{n}}(0) &= \frac{n!}{\pi^{n + 1}}, \qquad \qquad \qquad \quad
B_{\mathbb{D} \times \mathbb{B}^{n}}(0; \xi) = \sqrt{2|\xi_1|^2 + (n + 1)|\xi'|^2},\\
J_{\mathbb{D} \times \mathbb{B}^{n}}(0) &= \frac{2\pi^{n + 1}(n + 1)^n}{n!}, \qquad \quad 
R_{\mathbb{D} \times \mathbb{B}^{n}}(0; \xi) = -1,
\end{align*}
for $\xi = (\xi_1, \xi') \in \mathbb{C} \times \mathbb{C}^{n} \setminus \{0\}$.
\end{lem}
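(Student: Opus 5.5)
The statement to prove is the last one displayed, Lemma \ref{Formula for kernel}: the four invariants of the product domain $\mathbb{D}\times\mathbb{B}^n$ at the origin. I will compute them directly from the explicit Bergman kernels of the factors. I will write the product as a subset of $\mathbb{C}\times\mathbb{C}^n$ with coordinates $(z_1,z')$, $z'\in\mathbb{C}^n$. The Bergman kernel of a product is the product of the kernels. Hence
\[
\kappa_{\mathbb{D}\times\mathbb{B}^n}(z)=\frac{1}{\pi(1-|z_1|^2)^2}\cdot\frac{n!}{\pi^n(1-|z'|^2)^{n+1}},
\]
which already gives $\kappa(0)=n!/\pi^{n+1}$.

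Taking the logarithm splits it as $\log\kappa=-2\log(1-|z_1|^2)-(n+1)\log(1-|z'|^2)+\text{const}$. The metric is therefore block diagonal, with $g_{1\bar1}=2/(1-|z_1|^2)^2$ and with the $z'$ block equal to $(n+1)$ times the complex Hessian of $-\log(1-|z'|^2)$. At the origin this gives $G(0)=\operatorname{diag}(2,n+1,\dots,n+1)$, so
\[
B(0;\xi)=\sqrt{2|\xi_1|^2+(n+1)|\xi'|^2}.
\]
The determinant at $0$ is $2(n+1)^n$. Dividing by $\kappa(0)$ yields $J(0)=2\pi^{n+1}(n+1)^n/n!$.

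For the Ricci curvature I will use the product structure again. Write $\omega_{\mathbb{D}}=2\,\omega_P$, where $\omega_P$ is the Poincaré metric with $g_{1\bar1}=1/(1-|z_1|^2)^2$. Write $\omega_{\mathbb{B}}=(n+1)\,\omega_{FS\text{-hyp}}$, where $\omega_{FS\text{-hyp}}$ has Hessian $\partial\bar\partial(-\log(1-|z'|^2))$. Then
\[
\det G=\frac{2}{(1-|z_1|^2)^2}\cdot\frac{(n+1)^n}{(1-|z'|^2)^{n+1}},
\]
using the standard determinant identity $\det\bigl[\partial_i\bar\partial_j(-\log(1-|z'|^2))\bigr]=(1-|z'|^2)^{-(n+1)}$. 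Consequently
\[
-\partial\bar\partial\log\det G=-\partial\bar\partial\bigl(-2\log(1-|z_1|^2)-(n+1)\log(1-|z'|^2)\bigr)=-G,
\]
since $\det G$ coincides with $\kappa$ up to a constant factor. Thus $R_{i\bar j}=-g_{i\bar j}$ identically. Each factor's Bergman metric is Einstein with constant $-1$, so the product is too, and $R(0;\xi)=-1$ for every $\xi\neq0$.

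The only step needing care is the determinant identity for the ball. I would verify it at $z'=0$ and extend it by the automorphism invariance of both sides. Alternatively I would compute it directly: the Hessian is $(1-|z'|^2)^{-1}I+(1-|z'|^2)^{-2}\bar z' z'^{T}$, whose determinant is $(1-|z'|^2)^{-n}\bigl(1+|z'|^2/(1-|z'|^2)\bigr)$, which equals the claimed value. Everything else is routine.
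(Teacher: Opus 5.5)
Your argument is correct and complete. The product formula for the kernel gives $\kappa(0)=n!/\pi^{n+1}$. The block-diagonal Hessian gives $G(0)=\operatorname{diag}(2,n+1,\dots,n+1)$, which yields the stated $B(0;\xi)$ and $J(0)=2(n+1)^n\pi^{n+1}/n!$. Your matrix-determinant-lemma check of $\det\bigl[\partial_i\bar\partial_j(-\log(1-|z'|^2))\bigr]=(1-|z'|^2)^{-(n+1)}$ is right. It gives $\det G = 2(n+1)^n(1-|z_1|^2)^{-2}(1-|z'|^2)^{-(n+1)}$, a constant multiple of $\kappa$, so $R_{i\bar j}=-g_{i\bar j}$ and $R(0;\xi)=-1$.

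The paper does not prove this lemma; it only cites the author's earlier works. So there is no in-paper argument to compare against. The machinery the paper sets up for itself runs through extremal functions: Proposition \ref{Fuchs} and Proposition \ref{Pro 2.1}, evaluated on the product with a monomial basis. Your direct computation from $\log\kappa$ bypasses that and gives slightly more than the lemma asks for. It shows that $J_{\mathbb{D}\times\mathbb{B}^n}\equiv 2\pi^{n+1}(n+1)^n/n!$ on the whole product, and that its Bergman metric is Einstein with constant $-1$ everywhere, not just at the origin. The mismatch between this constant and the ball value $(n+2)^{n+1}\pi^{n+1}/(n+1)!$ is exactly what the proof of Theorem \ref{Main theorem} exploits.
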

We next recall the definition of an \emph{exponentially flat} boundary point for a domain \(D \subset\mathbb{C}^{n + 1}\), as introduced in \cite{Ravi_Bulletin}.
\begin{defn}\label{exp flat fun}
    Let $D \subset \mathbb{C}^{n + 1}$ be a domain and let $0 \in bD$. The boundary point
$0$ is said to be \emph{exponentially flat} if there exists a local defining function of $D$ near the origin of the form
\begin{align}\label{1}
    \rho(z) = \operatorname{Re}z_1 + \phi \left(|z'|^2\right),
\end{align}
 where $\phi : \mathbb{R} \to \mathbb{R}$ is a $C^{\infty}$-smooth function that is \emph{exponentially flat at the origin}, defined below.
\end{defn}
 \begin{defn}
    A $C^{\infty}$-smooth function $\phi: \mathbb{R} \to \mathbb{R}$ is said to be
    \emph{exponentially flat at the origin}, if it satisfies the following properties:
    \begin{enumerate}
    \item $\phi(x) = 0,$ for $x \leq 0$, 
    \item there exists $\epsilon_0 > 0$ such that $\phi''(x) > 0$, for $0 < x < \epsilon_0$, and
    \item the function 
    \begin{align*}
        \psi(x) := \begin{cases} 
      -1/ \operatorname{log}(\phi(x)), &  \text{if } 0 < x < \epsilon_0,\\
      0, & \text{if } x = 0,
    \end{cases}
    \end{align*}
    satisfies
    \begin{align}\label{asy of psi}
        \lim_{x \to 0^{+}}\frac{\psi(x)}{x^m} = C, \text{\,for some $m, C > 0$.}
    \end{align}
\end{enumerate}
\end{defn}
\begin{Remark}
If $\phi$ is exponentially flat at the origin, then,
\begin{align}\label{5}
    \phi^{(k)}(0) = 0, \quad \text{for all } k \in \mathbb{N}.
\end{align}
Using \eqref{5}, we can easily see that exponentially flat boundary points are points of infinite type in the sense of D'Angelo \cite{D'Angelo 1982}.
\end{Remark}
\begin{exmp}
For $m \in \mathbb{R}^{+}$,
\begin{align*}
    \phi(x) = \begin{cases} 
      0,  &\text{if } x \leq 0,\\
      \operatorname{exp}\left(-1/x^m\right),  & \text{if } x > 0,
      \end{cases}
\end{align*}
is exponentially flat at the origin.
\end{exmp}
We prove localization of extremal functions (Lemma \ref{localisation lemma 4}) on a (possibly unbounded) pseudoconvex domain with an exponentially flat boundary point by utilising the following version of the solution to the $\overline{\partial}$-problem due to H\"ormander \cite{Hormander 1965}. This version of the solution to the $\overline{\partial}$-problem is also used in Nikolov \cite{Nikolai}, and Huang--James--Li \cite{HJL25} to localize the extremal functions.
\begin{thm}[\cite{Hormander 1965}*{Theorem $2.2.1'$}, \cite{Gallagher}*{Theorem $5$}]\label{Prop L2}
Let $D \subset \mathbb{C}^n$ be a pseudoconvex domain, possibly unbounded, and let
$\varphi: D\to[-\infty,\infty)$ 
be a plurisubharmonic function. Assume that:

\begin{enumerate}
\item $U \subset D$ is open and \(\varphi(z)-c|z|^2\) is plurisubharmonic on \(U\) for some constant \(c>0\),

\item \(v\in L^2_{(0,1)}(D,\varphi)\) is a \(C^\infty\)-smooth \((0,1)\)-form on $D$ satisfying $\bar{\partial}v=0$
and $\operatorname{supp}(v)\subset U$.
\end{enumerate}

Then, there exists a \(C^\infty\)-smooth function \(u\) on $D$ such that
\[
\bar{\partial}u=v
\]
and
\begin{align}\label{16}
\int_{D}|u|^2 e^{-\varphi}\, dV
\le
\frac{1}{c}
\int_{D}|v|^2 e^{-\varphi}\, dV.
\end{align}
\end{thm}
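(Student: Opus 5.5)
We would deduce this from H\"ormander's weighted $L^2$-estimate in its ``dual curvature norm'' form. For a pseudoconvex domain $\Omega$, a smooth strictly plurisubharmonic weight $\varphi$ and a $\bar\partial$-closed form $v$, that estimate produces $u$ with $\bar\partial u = v$ and
\[
\int_{\Omega}|u|^2e^{-\varphi}\,dV \le \int_{\Omega}|v|^2_{i\partial\bar\partial\varphi}\,e^{-\varphi}\,dV,
\]
where $|v|^2_{i\partial\bar\partial\varphi}$ is the squared norm of $v$ measured in the dual of the Hermitian form $i\partial\bar\partial\varphi$. The point of hypotheses (1) and (2) is the following. Where $v\neq 0$ we are inside $U$, and there $i\partial\bar\partial\varphi \ge c\, i\partial\bar\partial|z|^2$, so $|v|^2_{i\partial\bar\partial\varphi}\le |v|^2/c$. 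Outside $U$, $v$ vanishes, so the curvature of $\varphi$ there is irrelevant (only $i\partial\bar\partial\varphi\ge 0$ is needed). This gives \eqref{16} at once in the smooth case.

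The steps, in order, are as follows.

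\emph{Step 1: exhaustion.} Exhaust $D$ by smoothly bounded, strictly pseudoconvex, relatively compact domains $D_1 \Subset D_2 \Subset \cdots$ with $\bigcup_j D_j = D$.

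\emph{Step 2: regularization.} On a neighbourhood of $\overline{D_j}$, replace $\varphi$ by convolutions $\varphi_{\varepsilon}=\varphi*\chi_\varepsilon$, which are smooth, plurisubharmonic, and decrease to $\varphi$ as $\varepsilon\to0$. Add $\delta|z|^2$ to make them strictly plurisubharmonic.

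\emph{Step 3: solve on the approximating domains.} Apply the smooth estimate on $D_j$ with weight $\varphi_\varepsilon+\delta|z|^2$. This gives solutions $u_{j,\varepsilon,\delta}$ whose weighted norms are uniformly bounded by $(1/c)\int_D|v|^2e^{-\varphi}\,dV$, up to errors that vanish in the limit.

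\emph{Step 4: pass to the limit.} Extract weak $L^2_{\mathrm{loc}}$ limits, first as $\delta,\varepsilon\to0$ and then as $j\to\infty$. Use monotone convergence in the weights ($e^{-\varphi_\varepsilon}\uparrow e^{-\varphi}$) and lower semicontinuity of the weighted norms under weak convergence. The limit $u$ solves $\bar\partial u=v$ in the sense of distributions on $D$ and satisfies \eqref{16}.

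\emph{Step 5: smoothness.} Choose $u$ to be the solution of minimal weighted norm. Locally, $u-w$ is holomorphic for any smooth local solution $w$ of $\bar\partial w=v$, which exists since $v$ is smooth and $\bar\partial$-closed. Hence $u$ is $C^\infty$.

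The main obstacle is Step 2. Convolution only preserves the lower bound $i\partial\bar\partial\varphi_\varepsilon\ge c\,i\partial\bar\partial|z|^2$ on the shrunken set $U_\varepsilon=\{z: \operatorname{dist}(z,D\setminus U)>\varepsilon\}$. Moreover, $\operatorname{supp}v$ need not be compact in $U$, and $\varphi$ may take the value $-\infty$.

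What we would try first is to avoid pointwise regularization of the curvature altogether and use the Demailly/Berndtsson formulation for singular weights. There the inequality $|\langle v,\xi\rangle|^2\le H\, i\partial\bar\partial\varphi(\xi,\xi)$ is required only in the sense of currents, with $H=|v|^2/c$ on $U$ and $v=0$ off $U$. This yields \eqref{16} directly on each $D_j$, with no shrinking of $U$. Failing that, we would handle the shrinkage as follows. Intersect with $\overline{D_j}$, so that $\operatorname{supp}v\cap\overline{D_j}$ is a closed subset of $U$ which we then also need to be bounded away from $bU$. To arrange this, multiply $v$ by cutoffs; the resulting error $\bar\partial$-forms are handled by solving twice, once for the cutoff part and once for the correction. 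Finally, let the cutoffs tend to $1$ using dominated convergence, since $v\in L^2_{(0,1)}(D,\varphi)$.
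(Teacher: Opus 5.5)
The paper does not prove this statement; it quotes it from H\"ormander (Theorem 2.2.1$'$) and Gallagher (Theorem 5). Your outline is the standard argument behind those citations, and it is correct. It proceeds via H\"ormander's estimate in dual-curvature form, then the observation that only the curvature of $\varphi$ on $\operatorname{supp}v$ enters, then exhaustion, regularization and weak limits. Compared with the bare citation, it makes explicit why weakening ``$\varphi-c|z|^2$ plurisubharmonic'' from all of $D$ to $U$ costs nothing. Your first option, a singular-weight version of H\"ormander's theorem, also works. The current inequality $i\bar v\wedge v\le (|v|^2/c)\,i\partial\bar\partial\varphi$ holds on each member of the open cover $\{U,\ D\setminus\operatorname{supp}v\}$, hence on $D$. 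Since that version applies on $D$ itself, no exhaustion is even needed.

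The obstacle you flag in Step 2 is not real, and the fallback you propose for it should be dropped.

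\emph{Why there is no obstacle.} Since $\operatorname{supp}v$ is closed in $D$ and $\overline{D_j}\subset D$ is compact, $K_j=\operatorname{supp}v\cap\overline{D_j}$ is a compact subset of $U$. Hence $d_j=\operatorname{dist}(K_j,\mathbb{C}^n\setminus U)>0$. Take $\varepsilon<d_j$, and also smaller than $\operatorname{dist}(\overline{D_j},\mathbb{C}^n\setminus D)$. Then $i\partial\bar\partial\varphi_\varepsilon=(i\partial\bar\partial\varphi)*\chi_\varepsilon\ge c\,i\partial\bar\partial|z|^2$ on a neighbourhood of $K_j$.

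\emph{The estimate, with no error terms.} Apply H\"ormander's smooth lemma on $D_j$ with weight $\psi=\varphi_\varepsilon+\delta|z|^2$. Use as continuous positive lower bound the smallest eigenvalue $\lambda_\psi\ge\delta$ of the complex Hessian of $\psi$. This bound is at least $c$ near $K_j$, while $v=0$ on $D_j\setminus K_j$. Together with $\psi\ge\varphi_\varepsilon\ge\varphi$ this gives
\[
\int_{D_j}|u|^2e^{-\psi}\,dV\le\frac1c\int_D|v|^2e^{-\varphi}\,dV
\]
exactly. No cutoffs are needed.

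\emph{Why the fallback would fail.} The cutoff-and-solve-twice scheme would not have worked anyway. $\chi v$ is not $\bar\partial$-closed. A correction $w$ with $\bar\partial w=\bar\partial\chi\wedge v$ has no reason to be supported in $U$, so the second $\bar\partial$-problem loses the curvature bound.

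\emph{Step 5.} The minimal-norm choice is superfluous. Any $L^2_{\mathrm{loc}}$ distributional solution of $\bar\partial u=v$ with $v$ smooth is $C^\infty$, by the local argument you already give.
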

\section{Asymptotic behavior at an exponentially flat boundary point of a pseudoconvex unbounded domain}\label{section 3}
In this section, we first prove the localization Lemma \ref{localisation lemma 4} for extremal functions. Then, combining this with the scaling Lemma \ref{ScalingLemma2} from \cite{Ravi_Bulletin} to obtain the nontangential asymptotic behavior of the Bergman kernel, the Bergman metric, the Bergman canonical invariant, and the Ricci curvature at exponentially flat infinite type boundary points of possibly unbounded pseudoconvex domains in $\mathbb{C}^{n + 1}$.

Let $D \subset \mathbb{C}^{n + 1}$ be a domain with $0 \in bD$ and let $q(t) = (-t, 0)$. If $0$ is an exponentially flat boundary point, then there exists a neighbourhood $U$ of the origin, such that
\begin{align}\label{*}
    D \cap U = \{z \in U: \rho(z) < 0\},
\end{align}
where the defining function $\rho$ is defined as in \eqref{1}, and $D \cap U$ is convex.

Since 
\begin{align}\label{peak fun.}
    h(z)=\operatorname{exp}(-(-z_1)^{\alpha}), \quad \text{for } \alpha \in (0, 1),
\end{align}
is a local holomorphic peak function of $D$ at $0$, there exist a bounded
plurisubharmonic function \(\theta\) on \(D\) and a constant \(r\in(0,1)\)
such that
\begin{align}\label{40}
    \theta(z)=|z|^2,
\end{align}
for $z\in D\cap\mathbb{B}^{n+1}(0,r)$ (see, e.g., \cite{Nikolai}*{p. $356$} and \cite{Jaiswal_Kar_arXiv}*{Lemma $3.3$}).

Let $W \subset \subset \mathbb{B}^{n + 1}(0, r/2) \cap U$ be a convex neighbourhood of the origin and assume 
\[\Omega = \{z \in \mathbb{C}^n: \rho(z) < 0\},\]
then $D \cap W = \Omega \cap W$.

To prove Theorem \ref{1.3}, we follow the construction of the domain $D$ as given in \cites{Ravi_Bulletin, Ravi_Synergies, Jaiswal_Kar_arXiv}.

We first cut out a portion of $D$ near the origin, depending on the point $q(t)$, and denote it by $D_t^{\epsilon}$ for $\epsilon > 0$. We then localize extremal functions from $D$ to $D_t^{\epsilon}$. Next, we apply the scaling map $\Sigma$ and biholomorphic map $f$ on $D_t^{\epsilon}$ so that the resulting domain converges to $\mathbb{D} \times \mathbb{B}^n$, we refer to as the scaling lemma. Using the scaling lemma, we conclude that extremal functions of $f \circ \Sigma(D_t^{\epsilon})$ converge to extremal functions of $\mathbb{D} \times \mathbb{B}^n$. Combining these arguments, we obtain the asymptotic behavior of extremal functions.

For small $\epsilon, t > 0$, define
\begin{align}
    D_t^{\epsilon} &= \left\{z \in W : \rho(z) < 0, \operatorname{Re}z_1 > - t^{{1}/{(1+\epsilon)^2}}\right\},
\end{align}
and $h_\epsilon : D_t^{\epsilon} \to \mathbb{D}$ is the holomorphic peak function of $D_t^{\epsilon}$ at zero, given by 
\begin{align}\label{hepsilon}
    h_\epsilon(z) = \operatorname{exp}\left(-(-z_1)^{{1}/{(1+ \epsilon)}}\right). 
\end{align}
Let \( d^{*}(t) \), \( d_{1}^{\epsilon}(t) \), and \( d_{2}^{\epsilon}(t) \) denote the complex tangential distances from \( -t e_1 \), \( -t^{\frac{1}{1+\epsilon}} e_1 \), and \( -t^{\frac{1}{(1+\epsilon)^2}} e_1 \), respectively, to \( b\Omega \), i.e.,
\begin{align*}
    d^*(t) &= 
    \operatorname{sup}\{|z'|: z \in W, \phi(|z'|^2) \leq t\} = \sqrt{\phi^{-1}(t)}, \\
    d_1^\epsilon(t) &=
     \operatorname{sup} \left\{|z'| : z \in W, \phi\left(|z'|^2\right) \leq t^{\frac{1}{(1+\epsilon)}}\right\} = \sqrt{\phi^{-1}\left(t^{\frac{1}{(1 + \epsilon)}}\right)}, \text{ and}\\
    d_2^\epsilon(t) &=
    \operatorname{sup}\left\{|z'| : z \in W, \phi\left(|z'|^2\right) \leq t^{\frac{1}{(1+\epsilon)^2}}\right\} = \sqrt{\phi^{-1}\left(t^{\frac{1}{(1 + \epsilon)^2}}\right)}.
\end{align*}
Define the scaling map $\Sigma : \mathbb{C}^{n+1} \to \mathbb{C}^{n+1}$
by
\begin{align}
    \Sigma(z_1, z') = \left(\frac{z_1}{t}, \frac{z'}{d^*(t)}\right),
\end{align}
where $z' = (z_2, \dots, z_{n+1})$.

Throughout this section, we assume that \( D \), \( W\) and $D_t^{\epsilon}$ are as described above.
\begin{Remark}
By the definitions of \( d^{*}, d_1^{\epsilon}, \) and \( d_2^{\epsilon} \), we have
    \begin{align}
        d^{*}(t) \leq d_1^{\epsilon}(t) \leq d_2^{\epsilon}(t),
    \end{align}
    for all sufficiently small \( \epsilon, t > 0 \), and using \eqref{asy of psi}, we obtain
    \begin{align}
        \lim_{t \to 0^{+}} \frac{d_1^{\epsilon}(t)}{d^{*}(t)} = (1 + \epsilon)^{\frac{1}{2m}},  \quad \text{and} \quad
        \lim_{t \to 0^{+}} \frac{d_2^{\epsilon}(t)}{d^{*}(t)} = (1 + \epsilon)^{\frac{1}{m}} \label{asy of d_2^{epsilon}}.        
    \end{align}
\end{Remark}
In the following proposition, we prove that, for every sufficiently small \(t>0\), there exists a negative plurisubharmonic function with a pole at \(q(t)=(-t,0)\) on a domain with an exponentially flat boundary point. This function will be used to obtain the H\"ormander \(L^2\)-estimate \eqref{16}, which plays a crucial role in the proof of the localization Lemma~\ref{localisation lemma 4}.
\begin{prop}\label{plur fun.}
    Let $D \subset \mathbb{C}^{n + 1}$ be a domain, possibly unbounded, and let $0 \in bD$. Assume $0$ is an exponentially flat boundary point and $q(t) = (-t, 0)$, then for $\epsilon > 0$, there exist a constant $t_0(\epsilon) > 0$ and a negative plurisubharmonic function $\psi_t^{\epsilon}$ on $D$ with a pole at $q(t)$ for each $t \in (0, t_0(\epsilon))$. 
\end{prop}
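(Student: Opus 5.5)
Our plan is to build a negative plurisubharmonic function on $D_t^{\epsilon}$ with a logarithmic pole at $q(t)$, and then extend it to all of $D$ by a gluing argument that uses the bounded plurisubharmonic function $\theta$ from \eqref{40}. This follows Nikolov \cite{Nikolai} and Huang--James--Li \cite{HJL25}.

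\textbf{Step 1: a pole function on a model domain.} The peak function $h_\epsilon$ from \eqref{hepsilon} maps $D_t^{\epsilon}$ holomorphically into $\mathbb{D}$ and sends $q(t)$ to $a_t := \exp(-t^{1/(1+\epsilon)})$. Let $g_t(w) = (w - a_t)/(1 - \bar a_t w)$ be the disc automorphism taking $a_t$ to $0$. Then
\[
u_1(z) = \log\bigl|g_t(h_\epsilon(z))\bigr|
\]
is negative and plurisubharmonic on $D_t^{\epsilon}$, with a logarithmic pole along the fibre through $q(t)$. To get a genuine point pole, add a term in the tangential directions, for instance
\[
\log\bigl(|z'|^2/(d_2^{\epsilon}(t))^2\bigr)
\]
suitably truncated. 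This is negative on $D_t^{\epsilon}$ because there $|z'| \le d_2^{\epsilon}(t)$, since $\phi(|z'|^2) < -\operatorname{Re} z_1 < t^{1/(1+\epsilon)^2}$. The combination $u = u_1 + \log(|z'|^2/d_2^{\epsilon}(t)^2)$ then has a pole at $q(t)$ and is negative plurisubharmonic on $D_t^{\epsilon}$.

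\textbf{Step 2: gluing to all of $D$.} Near the point $q(t)$ and deep inside $D_t^{\epsilon}$, we use $u$. Away from $D\cap W$, we need the function to be bounded below by a constant, so that we can take a maximum with a global function. For the global function we use $C(\theta - r'^2)$, or, following Nikolov, $\log |h|$ for the global peak function $h$ with an appropriate cut-off corrected via $\theta$. We then define
\[
\psi_t^{\epsilon} = \max\{u,\ A(\theta - c_0)\}
\]
on $D\cap W$, and $\psi_t^{\epsilon} = A(\theta - c_0)$ elsewhere.

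For this to be plurisubharmonic, we need $A(\theta - c_0) > u$ near the relative boundary $b(D_t^{\epsilon}) \cap D$. That relative boundary consists of two pieces:
\begin{itemize}
\item the slice $\operatorname{Re} z_1 = -t^{1/(1+\epsilon)^2}$;
\item the part of $bW$.
\end{itemize}
On both pieces $u$ must be very negative. That is, $|g_t \circ h_\epsilon|$ must be small there, while $\theta = |z|^2$ stays bounded. We also need $u < A(\theta - c_0)$ at $q(t)$, which holds trivially since $u = -\infty$ there.

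\textbf{Main obstacle.} The hardest step is the quantitative comparison on the slice $\operatorname{Re} z_1 = -t^{1/(1+\epsilon)^2}$. There $|h_\epsilon| \approx \exp(-t^{1/(1+\epsilon)^3})$, while $a_t = \exp(-t^{1/(1+\epsilon)})$. Since $t^{1/(1+\epsilon)^3} \gg t^{1/(1+\epsilon)}$, the pseudo-hyperbolic distance $|g_t|$ between these points stays close to $1$. So $\log|g_t \circ h_\epsilon|$ is only mildly negative, of order $-t^{1/(1+\epsilon)}/t^{1/(1+\epsilon)^3}$.

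One fix is to replace $\theta$ by a negative bounded plurisubharmonic function that vanishes at $0$ to controlled order, such as $\theta - |z|^2$-type barriers. A better fix is to rescale the whole construction. Multiply $u$ by a $t$-dependent normalization, or use $\max\{u, \delta_t(\theta - c_0)\}$ with $\delta_t \to 0$ chosen smaller than the values of $u$ on the relative boundary. This is legitimate because only the existence of some negative plurisubharmonic function with a pole is claimed, with no uniform bounds required.

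So we choose $c_0 > \sup_D \theta$ to ensure negativity. We then pick $\delta_t$ small enough that $\delta_t(\theta - c_0) > u$ on a neighbourhood of $b(D_t^{\epsilon}) \cap D$, using compactness of $\overline W$ and the continuity of $u$ there. Finally we take $t_0(\epsilon)$ small enough that $q(t) \in D_t^{\epsilon}$ and that $u$ is well defined, which needs $t^{1/(1+\epsilon)^2} < $ the size of $W$. The tangential log term likewise needs checking near $bW \cap D$, where $|z'|$ may be comparable to $d_2^{\epsilon}(t)$; the $\max$ absorbs this after shrinking $\delta_t$ further.
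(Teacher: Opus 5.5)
There is a genuine gap, and it is fatal. The function you end up with, $\psi_t^{\epsilon}=\max\{u,\delta_t(\theta-c_0)\}$, has no pole at all. Since $\theta$ is bounded on $D$, you have $\psi_t^{\epsilon}\ge\delta_t(\theta-c_0)\ge-\delta_t(c_0-\inf_D\theta)>-\infty$ everywhere. The condition that $u<A(\theta-c_0)$ at $q(t)$, which you treat as harmless, says precisely that the glued function coincides near $q(t)$ with the finite function $A(\theta-c_0)$. A maximum is $-\infty$ at a point only if both entries are. So $\max$-gluing can preserve a pole only when the outer function already has it, which is circular. No choice of $\delta_t$ or $t_0(\epsilon)$ repairs this, and your ``main obstacle'' analysis on the slice $\operatorname{Re}z_1=-t^{1/(1+\epsilon)^2}$ addresses the wrong difficulty.

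Step 1 has the dual slip. Adding $\log(|z'|^2/d_2^{\epsilon}(t)^2)$ to $u_1$ enlarges the polar set to $(\{z_1=-t\}\cup\{z'=0\})\cap D_t^{\epsilon}$ instead of cutting it down to $q(t)$. You would need $\max\{u_1,\log(|z'|/d_2^{\epsilon}(t))\}$ instead. A non-isolated pole is also useless for Lemma \ref{localisation lemma 4}. That lemma needs both $\phi_t^{\epsilon}\le(2n+8)\log|z-q(t)|$ near $q(t)$ and the lower bound \eqref{est of phi} on $D_t^{\epsilon}\setminus\widetilde{D}_t^{\epsilon}$, a set that meets $\{z'=0\}$, where $e^{-\phi_t^{\epsilon}}$ would then fail to be integrable.

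The missing idea is to glue additively and pay for the cut-off with the strict plurisubharmonicity of $\theta$ near $0$. Then no peak function or disc automorphism is needed. Set $s=t^{\epsilon/(4(1+\epsilon)^2)}$. The paper takes $f_t^{\epsilon}=\widetilde{\chi}(|z-q(t)|^2/s)\log|z-q(t)|^2$, which has three properties:
\begin{itemize}
\item it is $\le 0$;
\item it is plurisubharmonic where $\widetilde{\chi}(\cdot)\equiv 1$ or $\equiv 0$;
\item its Levi form on the transition annulus $\{\sqrt{s/2}\le|z-q(t)|\le\sqrt{s}\}$ is bounded below by $-Ms^{-3}|\xi|^2$, using $|\log s|\le s^{-1}$ for small $t$.
\end{itemize}
For small $t$ this annulus lies in $\mathbb{B}^{n+1}(0,r)$, where $\theta=|z|^2$ on $D$. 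Hence $\psi_t^{\epsilon}=\tfrac{1}{2}f_t^{\epsilon}+Ms^{-3}(\theta-\sup_D\theta-1)$ is negative and plurisubharmonic on all of $D$. It equals $\log|z-q(t)|$ plus a bounded function near $q(t)$, and it is bounded below off any neighbourhood of $q(t)$. This is exactly what the localization lemma uses.
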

\begin{proof}
Choose a cut-off function $\widetilde{\chi} \in C_c^\infty(-1, 1)$ such that 
\begin{align*}
    \widetilde{\chi} = 1 \text{ on } (-1/2, 1/2) \quad \text{and} \quad 0 \leq \widetilde{\chi} \leq 1. 
\end{align*}
Let $\epsilon$ and $t$ be sufficiently small positive real numbers. Define
\begin{align}\label{f_t}
    f_t^{\epsilon}(z) = \widetilde{\chi}\left(\frac{|z - q(t)|^2}{t^{\frac{\epsilon}{4(1 + \epsilon)^2}}}\right)\operatorname{log}(|z - q(t)|^2).
\end{align}
Here, $f_t^{\epsilon}$ is a  
plurisubharmonic function on $\mathbb{B}^{n+1}(q(t), {t^{\frac{\epsilon}{8(1+\epsilon)^2}}}/{\sqrt{2}})
\cup
(\mathbb{B}^{n+1}(q(t), t^{\frac{\epsilon}{8(1+\epsilon)^2}}))^c =: A$, and $f_t^{\epsilon} \in C^{\infty}(\mathbb{C}^{n + 1}\setminus \{q(t)\})$.
Let $z \in A^c$ and $\xi \in \mathbb{C}^{n + 1}$, consider
\begin{align*}
    \sum_{i, j = 1}^{n + 1}\frac{\partial^2 f_t^{\epsilon}}{\partial z_i \partial \bar{z}_j}(z) \xi_i  \bar{\xi}_j &\geq   \sum_{i, j = 1}^{n + 1}\frac{\partial^2}{\partial z_i \partial \bar{z}_j} \widetilde{\chi}\left(\frac{|z - q(t)|^2 }{t^{\frac{\epsilon}{4 (1 + \epsilon)^2}}}\right) \operatorname{log}(| z - q(t) |^2) \xi_i \bar{\xi}_j \\
    &+ \sum_{i, j = 1}^{n + 1} \frac{\partial}{\partial z_i} \widetilde{\chi}\left(\frac{| z - q(t)|^2 }{t^{\frac{\epsilon}{4 (1 + \epsilon)^2}}}\right) \frac{\partial}{\partial \bar{z}_j} \operatorname{log}(| z - q(t)| ^2)\xi_i \bar{\xi}_j \\
    &+ \sum_{i, j = 1}^{n+ 1}\frac{\partial}{\partial \bar{z}_j} \widetilde{\chi}\left(\frac{|z - q(t)|^2 }{t^{\frac{\epsilon}{4 (1 + \epsilon)^2}}}\right) \frac{\partial}{\partial z_i} \operatorname{log}(|z - q(t)| ^2)\xi_i\bar{\xi}_j\\
    &\geq \frac{M|\xi|^2}{t^{\frac{\epsilon}{2(1 + \epsilon)^2}}}\operatorname{log}(|z - q(t)|^2) + \frac{2\big|\sum_{i = 1}^{n + 1}({z_i} - {q}_i(t)) \bar{\xi}_i\big|^2}{ {t^{\frac{\epsilon}{4(1 + \epsilon)^2}}|z -q(t)|^2}}\widetilde{\chi}'\left(\frac{|z - q(t)|^2}{ {t^{\frac{\epsilon}{4(1 + \epsilon)^2}}}}\right)\\
    &\geq \frac{M|\xi|^2}{{t^{\frac{\epsilon}{2(1 + \epsilon)^2}}}} \operatorname{log}({t}^{\frac{\epsilon}{4(1 + \epsilon)^2}}/2) - \frac{M|\xi|^2}{{t^{\frac{\epsilon}{4(1 + \epsilon)^2}}}}\\
    &\geq \frac{M \epsilon |\xi|^2}{{4(1 + \epsilon)^2 t^{\frac{\epsilon}{2(1 + \epsilon)^2}}}}\operatorname{log}t -  \frac{M|\xi|^2}{{t^{\frac{\epsilon}{2(1 + \epsilon)^2}}}} \operatorname{log}(2) - \frac{M|\xi|^2}{t^{\frac{\epsilon}{4(1 + \epsilon)^2}}}\\
    &\geq - \frac{M|\xi|^2}{t^{\frac{\epsilon}{2(1 + \epsilon)^2}}} \left( 1 - \frac{\epsilon}{4(1 + \epsilon)^2} \operatorname{log}t\right) \\
    &\geq - \frac{M |\xi|^2}{t^{\frac{\epsilon}{2(1 + \epsilon)^2}}} \left(1 + \frac{1}{t^{\frac{\epsilon}{4(1 + \epsilon)^2}}}\right)\\
    &\geq -\frac{M |\xi|^2}{t^{\frac{3\epsilon}{4(1 + \epsilon)^2}}},
\end{align*}
for $t \in (0, t_0(\epsilon))$, where $t_0(\epsilon) > 0$ is sufficiently small, and
constant \(M>0\) appearing above may vary from step to step and is independent of $\epsilon > 0$ and $t \in (0, t_0(\epsilon))$. Hence
\begin{align}
    \sum_{i, j = 1}^{n + 1} \frac{\partial^2 f_t^{\epsilon}}{\partial z_i \partial \bar{z}_j}(z)\xi_i \bar{\xi}_j \geq - \frac{M|\xi|^2}{t^{\frac{3\epsilon}{4(1 + \epsilon)^2}}},
\end{align}
for $\xi \in \mathbb{C}^{n + 1}$ and $z \in \mathbb{C}^{n + 1} \setminus \{q(t)\}$. Define 
\begin{align}\label{psi_t}
    \psi_t^{\epsilon} := \frac{1}{2}f_t^{\epsilon} + \frac{M}{t^{\frac{3 \epsilon}{4(1 + \epsilon)^2}}} \bigg(\theta - \sup_{D} {\theta} - 1\bigg),
\end{align}
where $\theta$ is the bounded plurisubharmonic function on $D$, defined in \eqref{40}. Therefore, it is easy to see that $\psi_t^{\epsilon}$ is a negative plurisubharmonic function on $D$ with pole at $q(t)$ for each $t \in \big(0, t_0(\epsilon)\big)$.
\end{proof}
We now prove the localization Lemma \ref{localisation lemma 4} for extremal functions. The proof is based on the techniques developed by Nikolov \cite{Nikolai}, Krantz--Yu \cite{Krantz-Yu 1996}, James \cite{James}, Huang--James--Li \cite{HJL25}, and the author's previous works \cites{Ravi_Bulletin, Ravi_Synergies}.
\begin{lem}\label{localisation lemma 4}
Let $D \subset \mathbb{C}^{n + 1}$ be a pseudoconvex domain, possibly unbounded, and let $0 \in bD$. Assume $0$ is an exponentially flat boundary point and $q(t) = (-t, 0)$, then for $\epsilon > 0$, there exists $t_0(\epsilon) > 0$ such that $q(t) \in D^{*}$ for each $t \in (0, t_0(\epsilon))$, and 
\begin{align*}
\lim_{t \to 0^+} \frac{I_D^{0}(q(t))}{I_{D_t^\epsilon}^{0}(q(t))} &= 1, \qquad \quad \quad
\lim_{t \to 0^+} \frac{I_D^{1}(q(t);\xi(t))}{I_{D_t^\epsilon}^{1}(q(t);\xi(t))} = 1,\\
\lim_{t \to 0^+} \frac{\lambda_D^{k}(q(t))}{\lambda_{D_t^\epsilon}^{k}(q(t))} &= 1,
\qquad \quad \quad
\lim_{t \to 0^+} \frac{I_D(q(t);\xi(t))}{I_{D_t^\epsilon}(q(t);\xi(t))} = 1,
\end{align*}
where $\xi(t) \in \mathbb{C}^{n + 1} \setminus \{0\}$, and $k \in \{1, \dots, n+1\}$.
\end{lem}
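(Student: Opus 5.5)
Since $D_t^{\epsilon}\subset D$, monotonicity of the extremal problems gives at once $I_D^0(q(t))\le I_{D_t^\epsilon}^0(q(t))$, $I_D^1\le I^1_{D_t^\epsilon}$ and $\lambda_D^k\le\lambda^k_{D_t^\epsilon}$, because restricting a competitor for $D$ produces a competitor for $D_t^\epsilon$ of no larger norm. So it suffices to prove the reverse inequalities up to a factor $1+o(1)$ as $t\to0^+$. The plan is to take an extremal (or almost extremal) function $f$ for $D_t^\epsilon$ at $q(t)$ and modify it into a function $F\in A^2(D)$ with the same jet constraints at $q(t)$, whose derivative data at $q(t)$ are unchanged and whose norm satisfies $\norm{F}_{A^2(D)}\le(1+o(1))\norm{f}_{A^2(D_t^\epsilon)}$. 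As a byproduct, $F$ shows that $A^2(D)$ is base-point free and separates directions at $q(t)$, because $D_t^\epsilon$ is bounded and its Bergman space has both properties. Hence $q(t)\in D^{*}$.

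For the construction, take a cut-off $\chi$ equal to $1$ near $q(t)$, supported in the part of $D_t^\epsilon$ away from the artificial boundary piece $\{\operatorname{Re}z_1=-t^{1/(1+\epsilon)^2}\}$ and from $bW$. A natural choice is $\chi=\widetilde\chi$ applied to a function of $|h_\epsilon(z)|$, equal to $1$ where $|h_\epsilon|$ is close to $|h_\epsilon(q(t))|$ and vanishing where $|h_\epsilon|\le e^{-t^{1/(1+\epsilon)^2\cdot1/(1+\epsilon)}}$-type levels. This is the reason for the intermediate scale $t^{1/(1+\epsilon)}$ and for $d_1^\epsilon$. Put $v=f\,\bar\partial\chi$, which is $\bar\partial$-closed and supported in a region $U\subset D\cap\mathbb{B}^{n+1}(0,r)$ where $\theta=|z|^2$. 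Apply Theorem~\ref{Prop L2} with weight
\[
\varphi=(n+3)\,\psi_t^\epsilon\cdot 2+\theta ,
\]
or more simply $\varphi=2(n+3)\psi_t^\epsilon+\theta$ (the coefficient is large enough that $e^{-\varphi}$ is non-integrable at $q(t)$ to second order). Here $\psi_t^\epsilon$ is from Proposition~\ref{plur fun.}. Then $\varphi-|z|^2$ is plurisubharmonic on $U$, and we get $u$ with $\bar\partial u=v$ and $\int|u|^2e^{-\varphi}\le\int|v|^2e^{-\varphi}$. The non-integrability forces $u(q(t))=0$, $u'(q(t))=0$ and $u''(q(t))=0$, so $F=\chi f-u$ is holomorphic on $D$ with the same $2$-jet at $q(t)$ as $f$. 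Since $\psi_t^\epsilon\le0$ gives $e^{-\varphi}\ge e^{-\sup\theta}$, we get $\norm{u}_{L^2(D)}^2\lesssim\int_{\operatorname{supp}\bar\partial\chi}|f|^2e^{-\varphi}$.

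The main obstacle is showing that this last integral is $o(1)\norm{f}^2$. On $\operatorname{supp}\bar\partial\chi$, the point $z$ is at distance $\gtrsim t^{\epsilon/(8(1+\epsilon)^2)}$ from $q(t)$ once $t$ is small, since the support lies where $\operatorname{Re}z_1$ is comparable to $t^{1/(1+\epsilon)}$ or larger. There $f_t^\epsilon=0$ or is bounded, so $-\varphi\approx 2(n+3)M t^{-3\epsilon/(4(1+\epsilon)^2)}$. This is a large positive constant, and controlling it requires a matching smallness of $|f|^2$ on the support. I would get that smallness from the peak function: replace $f$ by $f h_\epsilon^{N}/h_\epsilon(q(t))^{N}$. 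Alternatively, I would bound $|\bar\partial\chi|^2$ and exploit $|h_\epsilon|^{2N}$ being of size $\exp(-2N t^{1/(1+\epsilon)^2})$ on the support versus $\exp(-2N t^{1/(1+\epsilon)})$ at $q(t)$. Choosing $N\sim t^{-1/(1+\epsilon)}$ keeps the normalizing factor bounded on the region where $f$ carries its mass. It also makes the support contribution $\exp(-c\,t^{-\epsilon/(1+\epsilon)^2})$, which beats $\exp(Mt^{-3\epsilon/(4(1+\epsilon)^2)})$ because $\epsilon/(1+\epsilon)^2>3\epsilon/(4(1+\epsilon)^2)$. Verifying that the peak-function multiplier perturbs the jets and norms at $q(t)$ only by $1+o(1)$ will use the scaling geometry $d^*,d_1^\epsilon,d_2^\epsilon$ and \eqref{asy of d_2^{epsilon}}, and this is where I expect most of the work.

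For $I_D$, which involves $G_D^{-1}$, the argument above gives the analogous comparison of the numerator quantities with $f''$. I would combine this with Proposition~\ref{est. of G}(1) and the already-proved limit $I_D^0/I^0_{D_t^\epsilon}\to1$. This yields the lower bound $\liminf I_D/I_{D_t^\epsilon}\ge1$. For the upper bound, I would run the same localization with $G_{D}^{-1}$ compared to $G_{D_t^\epsilon}^{-1}$ through the metric limits already obtained for $I^1$ and $I^0$. This gives the limit $1$.
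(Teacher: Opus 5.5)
\textbf{The gap: your choice of $N$.} Your architecture is the paper's: monotonicity gives the upper bounds; a cut-off times a power of $h_\epsilon$ is corrected via H\"ormander with a weight whose pole at $q(t)$ kills the $2$-jet of $u$; and Proposition~\ref{est. of G}(1) plus the $I^0$ limit gives the lower bound for $I_D$. But the step meant to produce the factor $1+o(1)$ fails with $N\sim t^{-1/(1+\epsilon)}$. Since $h_\epsilon(q(t))=\exp(-t^{1/(1+\epsilon)})$, this choice makes $h_\epsilon(q(t))^N$ converge to a fixed constant $e^{-c}<1$. Then $\norm{F}_{L^2(D)}\le(\norm{f}+\norm{u})/h_\epsilon(q(t))^N$ only yields $\liminf_{t\to0^+} I_D^0(q(t))/I^0_{D_t^\epsilon}(q(t))\ge e^{-2c}$.

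The scaling geometry cannot repair this. In the coordinates $z_1=tw$, the multiplier $m=(h_\epsilon/h_\epsilon(q(t)))^N$ converges to the non-constant function $\exp\bigl(c(1-(-w)^{1/(1+\epsilon)})\bigr)$. Since $m(q(t))=1$ and the $I^0$-extremal $f$ has minimal norm among functions with its value at $q(t)$, you get $\norm{fm}^2=\norm{f}^2+\norm{f(m-1)}^2$. The last term does not become negligible, because $m-1$ tends to a nonzero function where the rescaled $f$ carries its mass. So your construction genuinely loses a fixed factor.

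\textbf{The fix.} Take $N$ strictly between the two scales: $N=\lfloor t^{-c_\epsilon}\rfloor$ with $\frac{8+7\epsilon}{8(1+\epsilon)^2}<c_\epsilon<\frac{1}{1+\epsilon}$, as in the paper. With your weight, any $c_\epsilon>\frac{4+3\epsilon}{4(1+\epsilon)^2}$ suffices. Then $h_\epsilon(q(t))^N\ge\exp(-t^{1/(1+\epsilon)-c_\epsilon})\to1$, and the norm is controlled by the trivial bound $|h_\epsilon|^N\le1$; no scaling geometry is needed in the localization. Take $\chi=1$ where $|h_\epsilon|>\exp(-c_0^\epsilon t^{1/(1+\epsilon)^2})$, with $c_0^\epsilon=\cos(\pi/(2(1+\epsilon)))$. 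On the support of $\bar\partial\chi$ you then have $|h_\epsilon|^{2N}\le\exp\bigl(-2c_0^\epsilon(t^{-c_\epsilon}-1)t^{1/(1+\epsilon)^2}\bigr)$. This still beats $\exp(Ct^{-3\epsilon/(4(1+\epsilon)^2)})$ times any power of $1/t$, and the window is nonempty exactly because $3\epsilon/4<\epsilon$. The jets at $q(t)$ are preserved automatically by the vanishing conditions in each extremal problem.

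\textbf{Two smaller points.} First, the support of $\bar\partial\chi$ is not at distance $\gtrsim t^{\epsilon/(8(1+\epsilon)^2)}$ from $q(t)$. It contains points on the $z_1$-axis with $|z_1|\le t^{1/(1+\epsilon)^2}$, where $f_t^\epsilon=\log|z-q(t)|^2$ is of size $\log t$. This only costs a polynomial factor, since $|z-q(t)|\ge|z_1|-t\ge t$ there, but it must be accounted for. Second, your upper bound for $I_D$ via $\overline{G}_D^{-1}$ versus $\overline{G}_{D_t^\epsilon}^{-1}$ is a valid alternative to the paper's route, which uses $M_D\le M_{D_t^\epsilon}$ and the identity linking $I_D$, $M_D$, $\kappa_D$, $J_D$ from Proposition~\ref{Pro 2.1}. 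It works provided you note that the $I^0$ and $I^1$ limits hold for arbitrary $\xi(t)$, hence uniformly in direction. That gives $\overline{G}_D^{-1}\le(1+o(1))\overline{G}_{D_t^\epsilon}^{-1}$ as Hermitian forms.
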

\begin{proof}
    Let $\epsilon > 0$. Choose a cut-off function $\chi \in C_c^\infty(\mathbb{D} \times \mathbb{B}^n(0, 2))$ such that 
\begin{equation*}
    0 \leq \chi \leq 1, \text{ and} \quad \chi = 1 \text{ on } \mathbb{B}^1(0, 1/2) \times \mathbb{B}^{n}. 
\end{equation*}
Define,
\begin{equation*}
    \chi_t(z) = \chi\left(\frac{z_1}{t^{\frac{1}{(1+\epsilon)^2}}}, \frac{z'}{d_1^{\epsilon}(t)}\right) \text{ for } z \in (z_1, z') \in \mathbb{C}^{n + 1}.
\end{equation*}
Let $ z \in \widetilde{D}_t^\epsilon := \left\{z \in D_t^{\epsilon}: \left|h_{\epsilon}(z)\right| > \operatorname{exp}\left(- c_0^{\epsilon}t^{{1}/{(1+\epsilon)^2}}\right)\right\}$, where $c_0^{\epsilon} = \operatorname{cos}\left(\frac{\pi}{2(1 + \epsilon)}\right)$ and $h_{\epsilon}$ is defined in \eqref{hepsilon}.
Then, 
\[|z_1| \leq t^{\frac{1}{(1+\epsilon)}}\] 
and there exists $t_0(\epsilon) > 0$ such that
\begin{align*}
    \frac{|z_1|}{t^{\frac{1}{(1+\epsilon)^2}}} \leq t^\frac{\epsilon}{(1+\epsilon)^2} < 1/2, \text{  and  }
    \frac{|z'|}{d_1^\epsilon(t)} < 1,
\end{align*}
for $t \in (0, t_0(\epsilon))$. Therefore, $\chi_t = 1$ on $\widetilde{D}_t^\epsilon$. Let
\begin{equation*}
V_t = W \cap \left\{z \in \mathbb{C}^{n+1} : |\operatorname{Re}z_1| < t^\frac{1}{(1+\epsilon)^2} \right\},
\end{equation*}
where $W$ is defined at the beginning of this Section.
Note that $\chi_t \in C_c^\infty(V_t)$, and $V_t \cap D = D_t^\epsilon$. Let $f \in A^2(D_t^\epsilon)$ and $k \in \mathbb{N}$. Making $t_0(\epsilon) > 0$ smaller, if necessary, set
\begin{align}
    \alpha &= \overline{\partial}\left(f\chi_t h_{\epsilon}^k\right) \text{ on } D, \text{ and}\\
    \phi_t^{\epsilon} &= (2n+8) \psi_t^{\epsilon} + (\theta - \sup_{D}\theta - 1)\label{phi_t}, 
\end{align}
where $\psi_t^{\epsilon}$ is defined in \eqref{psi_t}, for $t \in (0, t_0(\epsilon))$. Since $\phi_t^{\epsilon} - |z|^2$ is plurisubharmonic on $D_t^{\epsilon}$, as $\theta(z) = |z|^2$ on $D_t^{\epsilon}$, and $\alpha$ is a $C^{\infty}$-smooth $(0, 1)$-form on $D$ satisfying $\bar{\partial}\alpha = 0$ and $\operatorname{supp} \alpha \subset D_t^{\epsilon}$, using Theorem \ref{Prop L2}, there exists $u \in C^{\infty}(D)$ satisfying
\begin{equation*}
    \overline{\partial}u = \alpha \text{ on } D,
\end{equation*}and
\begin{equation}\label{Hor inequality 2}
\int_{D} {|u|^2e^{-\phi_t^{\epsilon}}}\, \mathrm{d}V \leq \int_{D}|\alpha|^2 e^{-\phi_t^{\epsilon}} \, \mathrm{d}V.
\end{equation}
Now,
\begin{align}\label{3.887}
   \int_{D}|\alpha|^2 e^{-\phi_t^{\epsilon}} \, \mathrm{d}V &= \int_{D\cap V_t}{|f|^2|\overline{\partial}{\chi_t}|^2|h_{\epsilon}|^{2k}} e^{-\phi_t^{\epsilon}} \, \mathrm{d}V \leq \frac{Ca_t^{2k}}{t^2}\int_{D_t^\epsilon \setminus 
    \widetilde{D}_t^\epsilon}{|f|^2} e^{-\phi_t^{\epsilon}}\, \mathrm{d}V,
\end{align}
where $a_t = \operatorname{exp}\left(-c_0^{\epsilon}t^{{1}/{(1+\epsilon)^2}}\right)$, and $C > 0$ depends on
first-order derivatives of $\chi$. 

Let \(z \in D_t^{\epsilon} \setminus \widetilde{D}_t^{\epsilon}\). By \eqref{f_t}, \eqref{psi_t} and \eqref{phi_t}, after possibly choosing \(t_0(\epsilon)>0\) smaller, we obtain
\begin{align}
    \phi_t^{\epsilon}(z) &= (2n + 8) \widetilde{\chi}\left(\frac{|z - q(t)|^2}{t^{\frac{\epsilon}{4(1 + \epsilon)^2}}}\right) \operatorname{log}(|z - q(t)|) + \left(\frac{(2n + 8)M}{t^{\frac{3 \epsilon}{4(1 + \epsilon)^2}}} + 1\right)\left(\theta - \sup_D\theta - 1\right)\label{30}\\
    &\geq (2n + 8)\operatorname{log}(|z - q(t)|) - {t^{-\frac{7 \epsilon}{8(1 + \epsilon)^2}}} + \left(\inf_D \theta - \sup_D \theta - 1\right)\label{est of phi},
\end{align}
for each $t \in (0, t_0(\epsilon))$.

Using \eqref{3.887} and \eqref{est of phi}, we conclude
\begin{align}\label{21}
    \int_D|\alpha|^2 e^{-\phi_t^{\epsilon}} \, \mathrm{d}V \leq \frac{C_1a_t^{2k}}{t^2}\int_{D_t^{\epsilon} \setminus \widetilde{D}_t^{\epsilon}}\frac{|f|^2}{|z - q(t)|^{2n + 8}}e^{t^{-\frac{7 \epsilon}{8(1 + \epsilon)^2}}} \, \mathrm{d}V.
\end{align}

Making $t_0(\epsilon) > 0$ smaller, if necessary, we have
\begin{align}\label{3.89}
    |z - q(t)| \geq |z_1 + t| \geq t,
\end{align}
for each $z \in D_t^\epsilon \setminus 
    \widetilde{D}_t^\epsilon$ and $t \in (0, t_0(\epsilon))$. Using \eqref{21} and \eqref{3.89}, we get 
\begin{align}\label{estimate alpha 2}
    \int_{D}|\alpha|^2 e^{-\phi_t^{\epsilon}} \, \mathrm{d}V &\leq \frac{C_1 a_t^{2k}\operatorname{exp}\left(t^{-\frac{7\epsilon}{8(1 + \epsilon)^2}}\right)}{t^{(2n + 10)}} \norm{f}^2_{L^2(D_t^{\epsilon})},
\end{align}
for each $t \in (0, t_0(\epsilon))$.

From \eqref{Hor inequality 2} and
\eqref{estimate alpha 2}, we have
\begin{equation}\label{res equation}
    \norm{u}_{L^2(D)}^2 \leq \int_D |u|^2 e^{-\phi_t^{\epsilon}} \, \mathrm{d}V \leq \frac{C_1 a_t^{2k} \operatorname{exp}\left(t^{-\frac{7 \epsilon}{8(1 + \epsilon)^2}}\right)}{t^{(2n + 10)}}\norm{f}_{L^2(D_t^\epsilon)}^2. 
\end{equation}
Making $t_0(\epsilon) > 0$ smaller, if necessary, we have $\mathbb{B}^{n + 1}(q(t), t) \subset D$ and from \eqref{30}, we conclude
\[\phi_t^{\epsilon}(z) \leq (2n + 8)\operatorname{log}(|z - q(t)|),\]
for $z \in \mathbb{B}^{n + 1}(q(t), t)$ and $t \in (0, t_0(\epsilon))$.
Consider
\begin{equation}\label{19}
   \infty > \int_D {|u|^2e^{-\phi_t^{\epsilon}}} \, \mathrm{d}V \geq \int_{\mathbb{B}^{n + 1}(q(t), t)} {|u|^2e^{-\phi_t^{\epsilon}}} \, \mathrm{d}V \geq \int_{\mathbb{B}^{n + 1}(q(t), t)}  \frac{|u|^2}{|z - q(t)|^{(2n+8)}} \, \mathrm{d}V.
\end{equation}
From the above equation \eqref{19}, we conclude
\begin{equation}
        \frac{\partial^{|\alpha| + |\beta|} u}{\partial z^{\alpha} \partial \bar{z}^{\beta}}(q(t)) = 0 \text{ for all multi-indices $\alpha, \beta$ with $|\alpha| + |\beta| \leq 2$.}
\end{equation}
\textbf{Step 1.} We now apply the above for the $f \in A^2\left(D_t^{\epsilon}\right)$ with $\norm{f}_{L^2(D_t^{\epsilon})} \leq 1$ that satisfies 
\begin{equation*}
    I_{D_t^{\epsilon}}^0(q(t)) = |f(q(t))|^2 > 0.
\end{equation*}
Here, $I_{D_t^{\epsilon}}^0(q(t)) > 0$ as $D_t^{\epsilon}$ is bounded.
Set
\begin{equation*}
    g = \frac{\chi_t f h_{\epsilon}^k - u}{\left(h_{\epsilon}(q(t)\right)^k} \text{ on $D$}.
\end{equation*}
It follows that $g$
is holomorphic on $D$, $g(q(t)) = f(q(t))$, and using \eqref{res equation}, we obtain
\begin{align}
    \norm{g}_{L^2(D)} &\leq \frac{\norm{f}_{L^2(D_t^{\epsilon})} + \norm{u}_{L^2(D_t)}}{\left(h_{\epsilon}(q(t))\right)^k} 
    \leq \frac{1 + \frac{\sqrt{C_1} a_t^{k}\operatorname{exp}\left(t^{-\frac{7\epsilon}{8(1 + \epsilon)^2}}/2\right)}{t^{(n+5)}}}{\left(h_{\epsilon}(q(t))\right)^k} < \infty \label{L2 norm of g}.
\end{align}
Since $g(q(t)) \neq 0$, $A^2(D)$ is base-point free at $q(t)$.
Consider
\begin{align}
    \frac{I_{D}^0(q(t))}{I_{D_t^{\epsilon}}^0(q(t))} &\geq \frac{|g(q(t))|^2}{\norm{g}_{L^2(D)}^2 \cdot |f(q(t))|^2} \geq \left(\frac{(h_{\epsilon}(q(t)))^k}{1 + \frac{\sqrt{C_1} a_t^{k}\operatorname{exp}\left(t^{-\frac{7\epsilon}{8(1 + \epsilon)^2}}/2\right)}{t^{(n+5)}}}\right)^2 \label{3.88},
\end{align}
for each $t \in (0, t_0(\epsilon))$. For every $\epsilon > 0$, there exist $c_{\epsilon} > 0$, such that
\begin{align}\label{est of cepsilon}
    \frac{8 + 7\epsilon}{8(1 + \epsilon)^2} < c_{\epsilon} < \frac{1}{(1+\epsilon)}.
\end{align}
Choosing $k$ to be the greatest integer of $t^{-c_{\epsilon}}$. Hence
\begin{align}\label{est of k}
   t^{-c_{\epsilon}} - 1  < k \leq t^{-c_{\epsilon}}.
\end{align}
Using \eqref{3.88} and \eqref{est of k}, we have
\begin{align*}
     \frac{I_{D}^0(q(t))}{I_{D_t^\epsilon}^0(q(t))} &\geq \left(\frac{\left(h_{\epsilon}(q(t)\right)^k}{1 + \frac{\sqrt{C_1} a_t^{k}\operatorname{exp}\left(t^{-\frac{7\epsilon}{8(1 + \epsilon)^2}}/2\right)}{t^{(n+5)}}}\right)^2\\
     &\geq \left(\frac{\operatorname{exp}(-t^{\frac{1}{1 + \epsilon} - c_{\epsilon}})}{1 + \sqrt{C_1}\frac{\operatorname{exp}\left(-c_0^{\epsilon}(t^{-c_{\epsilon}} - 1)t^{\frac{1}{(1 + \epsilon)^2}} + t^{-\frac{7\epsilon}{8(1 + \epsilon)^2}}/2\right)}{t^{(n + 5)}}}\right)^2\\
     &\geq \left(\frac{\operatorname{exp}(-t^{\frac{1}{1 + \epsilon} - c_{\epsilon}})}{1 + \sqrt{C_1}\frac{\operatorname{exp}\left(\frac{-c_0^{\epsilon} + \left(c_0^{\epsilon}-t^{\frac{8 + 7\epsilon}{8(1 + \epsilon)^2}} + 1/2\right)t^{c_{\epsilon} - \frac{8 + 7\epsilon}{8(1 + \epsilon)^2}}}{t^{\frac{7\epsilon}{8(1 + \epsilon)^2} + c_{\epsilon} - \frac{8 + 7\epsilon}{8(1 + \epsilon)^2}}}\right)}{t^{(n + 5)}}}\right)^2
\end{align*}
Using \eqref{est of cepsilon}, we conclude
\begin{align}\label{lim sup of I_0}
    \liminf_{t \to 0^{+}} \frac{I_D^0(q(t))}{I_{D_t^{\epsilon}}^0(q(t))} \geq 1.
\end{align}
Since $D_t^{\epsilon} \subset D$ and $I^0_D$ is monotone decreasing with respect to $D$,
\begin{align}\label{lim inf I_0}
    \limsup_{t \to 0^{+}}\frac{I_D^0(q(t))}{I_{D_t^{\epsilon}}^0(q(t))} \leq 1.
\end{align}
Combining \eqref{lim sup of I_0} and \eqref{lim inf I_0}, we obtain
\begin{align}\label{loc of I0}
\lim_{t \to 0^+} {\frac{I_{D}^0(q(t))}{I_{D_t^\epsilon}^0(q(t))}} = 1,
\end{align}
for each $\epsilon > 0$.

Similarly, $A^2(D)$ is separates holomorphic directions at $q(t)$, for each $t \in (0, t_0(\epsilon))$, and for $\xi(t) \in \mathbb{C}^{n + 1}\setminus \{0\}$, we have
\begin{align}
    \lim_{t \to 0^+} {\frac{I_{D}^1\left(q(t); \xi(t) \right)}{I_{D_t^\epsilon}^{1}\left(q(t); \xi(t) \right)}} = 1, \quad \text{and} \quad \label{lim of I1}
    \lim_{t \to 0^+} \frac{\lambda_D^{k}(q(t))}{\lambda_{D_t^\epsilon}^{k}(q(t))} = 1, \quad \forall \, k \in \{1, \dots, n + 1\},
\end{align}
for each $\epsilon > 0$.

\textbf{Step 2.}
Since the extremal function $I_{D}$ and $\xi \overline{G}_D^{-1} \xi^{*}$, for $\xi \in \mathbb{C}^{n + 1} \setminus \{0\}$, may not be monotone with respect to domains, we have utilized Proposition \ref{est. of G} to prove localization of the extremal function $I_D$.

Let $\tilde{f} \in A^2(D_t^{\epsilon})$ be the maximum function for $I_{D_t^{\epsilon}}(q(t); \xi(t))$, i.e., 
\begin{align*}
   I_{D_t^{\epsilon}}(q(t); \xi(t)) = \xi(t)\tilde{f}''(q(t))\overline{G}^{-1}_{D
_t^{\epsilon}}(q(t)) \overline{\tilde{f}''(q(t))}\xi(t)^{*},
\end{align*}
$\tilde{f}(q(t)) = 0, \, \tilde{f}'(q(t)) = 0$, and 
${\lVert \tilde{f} 
\bigr\rVert_{L^2(D_t^{\epsilon})}} \leq 1$.

Define 
\[\tilde{g} = \frac{\chi_t \tilde{f} h_{\epsilon}^k - u}{\left(h_{\epsilon}(q(t)\right)^k} \text{ on $D$}.\]
It follows that $\tilde{g}$ is holomorphic on $D$, $\tilde{g}(q(t)) = 0, \, \tilde{g}'(q(t)) = 0, \, \tilde{g}''(q(t)) = \tilde{f}''(q(t))$, and using \eqref{res equation}, we obtain
\begin{align}\label{L2 norm of gtilde}
    \norm{\tilde{g}}_{L^2(D)} &\leq \frac{{\lVert \tilde{f} \bigr\rVert}_{L^2(D_t^{\epsilon})} + \norm{u}_{L^2(D_t)}}{\left(h_{\epsilon}(q(t))\right)^k} 
    \leq \frac{1 + \frac{\sqrt{C_1} a_t^{k}\operatorname{exp}\left(t^{-\frac{7\epsilon}{8(1 + \epsilon)^2}}/2\right)}{t^{(n+5)}}}{\left(h_{\epsilon}(q(t))\right)^k} < \infty.
\end{align}  

Consider
\begin{align*}
        I_{D}(q(t); \xi(t)) &\geq \norm{\tilde{g}}_{L^2(D)}^{-2}\xi(t)\tilde{g}''(q(t))\overline{G}^{-1}_{D}(q(t))\overline{\tilde{g}''(q(t))}\xi(t)^{*}\\
        &\geq \norm{\tilde{g}}_{L^2(D)}^{-2} \frac{\kappa_{D}(q(t))}{\kappa_{D_t^{\epsilon}}(q(t))}I_{D^{\epsilon}_t}((q(t)); \xi(t)). \qquad (\text{by applying Proposition \ref{est. of G}})
\end{align*}
Using \eqref{L2 norm of gtilde}, we obtain
\begin{align*}
     \frac{ I_{D}(q(t); \xi(t))}{I_{D^{\epsilon}_t}(q(t); \xi(t))} \geq \left(\frac{(h_{\epsilon}(q(t)))^k}{1 + \frac{\sqrt{C_1} a_t^{k}\operatorname{exp}\left(t^{-\frac{7\epsilon}{8(1 + \epsilon)^2}}/2\right)}{t^{(n+5)}}}\right)^2 \frac{\kappa_{D}(q(t))}{\kappa_{D_t^{\epsilon}}(q(t))},
\end{align*}
for $t \in (0, t_0(\epsilon))$. Choosing $k \in \mathbb{N}$ as in \eqref{est of k}, and using Proposition \ref{Fuchs} together with \eqref{loc of I0}, we conclude
\begin{align}\label{96}
\liminf_{t \to 0^+} {\frac{I_{D}(q(t); \xi(t))}{I_{D_t^\epsilon}\left(q(t); \xi(t)\right)}} \geq 1.
\end{align}
Since $D_t^{\epsilon} \subset D$ and $q(t) \in D^{*}$, Proposition \ref{Pro 2.1} and Proposition \ref{est. of G} imply that
\begin{align*}
    \frac{I_{D}(q(t); \xi(t))}{I_{D_t^\epsilon}\left(q(t); \xi(t)\right)} \leq \frac{\kappa^n_{D_t^{\epsilon}}(q(t))}{\kappa^n_{D}(q(t))} \frac{J_{D_t^{\epsilon}}(q(t))}{J_{D}(q(t))}.
\end{align*}
Using Proposition \ref{Fuchs}, Proposition \ref{Pro 2.1}, \eqref{loc of I0} and \eqref{lim of I1}, we conclude
\begin{align}\label{98}
    \limsup_{t \to 0^+} {\frac{I_{D}(q(t); \xi(t))}{I_{D_t^\epsilon}\left(q(t); \xi(t)\right)}}
    \leq 1,
\end{align}
Combining \eqref{96} and \eqref{98}, we obtain
\begin{align*}
    \lim_{t \to 0^+} {\frac{I_{D}(q(t); \xi(t))}{I_{D_t^\epsilon}\left(q(t); \xi(t)\right)}} = 1,
\end{align*}
for each $\epsilon > 0$.
\end{proof}
We next recall the scaling lemma from \cite{Ravi_Bulletin}, which states that the limiting domain of $f \circ \Sigma (D_t^\epsilon)$ is $\mathbb{D} \times \mathbb{B}^n$, where
$f: \{z \in \mathbb{C}^{n + 1}: \operatorname{Re}z_1 < 0\} \to \mathbb{D} \times \mathbb{C}^n$ is the biholomorphism given by
\begin{align}
    f(z_1, z') = \left(\frac{1+z_1}{1-z_1}, z'\right),
\end{align}
and the scaling map $\Sigma$ and $D_t^{\epsilon}$ are defined at the beginning of this section.
\begin{lem}[\cite{Ravi_Bulletin}*{Lemma $3.4$}]\label{ScalingLemma2}
    Let $D \subset \mathbb{C}^{n + 1}$ and $D_t^{\epsilon} \subset \mathbb{C}^{n + 1}$ be as defined at the beginning of Section \ref{section 3}. Then, for every $\epsilon, \delta > 0,$ there exists $t_0(\delta, \epsilon) > 0$ such that
    \begin{equation}\label{inclusions 2}
        (1-\delta)\left(\mathbb{D}\times \mathbb{B}^n\right) \subset f \circ \Sigma \left(D_t^\epsilon\right) \subset
        \mathbb{D} \times \mathbb{B}^n(0, {d}_2^\epsilon(t)/d^*(t)),
    \end{equation}
for each $ 0 < t < t_0(\delta, \epsilon
)$. 
\end{lem}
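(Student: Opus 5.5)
The plan is to prove both inclusions in \eqref{inclusions 2} directly, by writing $\Sigma(D_t^{\epsilon})$ explicitly in the scaled coordinates $w = \Sigma(z) = (z_1/t, z'/d^*(t))$. From the definition of $D_t^\epsilon$,
\begin{align*}
\Sigma(D_t^\epsilon) = \left\{ w : (tw_1, d^*(t)w') \in W,\ \operatorname{Re} w_1 < -\frac{\phi\left(d^*(t)^2|w'|^2\right)}{t},\ \operatorname{Re} w_1 > -t^{\frac{1}{(1+\epsilon)^2}-1} \right\}.
\end{align*}
I will also use the monotonicity of $\phi$ on $[0,\epsilon_0)$. Since $\phi \ge 0$, $\phi(0)=\phi'(0)=0$ and $\phi''>0$ on $(0,\epsilon_0)$, $\phi$ is strictly increasing there. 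We may assume $W$ is small enough that $|z'|^2<\epsilon_0$ on $W$.

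\textbf{Upper inclusion.} Let $z \in D_t^\epsilon$. Then $\operatorname{Re} z_1 < -\phi(|z'|^2) \le 0$, so $\Sigma(z)$ lies in the left half-space and $f\circ\Sigma(z)$ has first coordinate in $\mathbb{D}$. Moreover, $\phi(|z'|^2) < -\operatorname{Re} z_1 < t^{1/(1+\epsilon)^2}$. By monotonicity of $\phi$ this gives $|z'|^2 < \phi^{-1}(t^{1/(1+\epsilon)^2}) = d_2^\epsilon(t)^2$. Hence $|w'| = |z'|/d^*(t) < d_2^\epsilon(t)/d^*(t)$, and $f$ leaves $w'$ unchanged. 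This inclusion holds for every small $t$ and needs no limiting argument.

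\textbf{Lower inclusion.} Fix $\zeta = (\zeta_1,\zeta') \in (1-\delta)(\mathbb{D}\times\mathbb{B}^n)$ and set $w_1 = f^{-1}$-coordinate $= (\zeta_1-1)/(\zeta_1+1)$ and $w' = \zeta'$. Because $|\zeta_1| < 1-\delta$, there are constants $c(\delta),C(\delta)>0$, independent of $\zeta$, with $\operatorname{Re} w_1 \le -c(\delta)$ and $|w_1| \le C(\delta)$; also $|w'| < 1-\delta$. I then check the three conditions defining $\Sigma(D_t^\epsilon)$, uniformly in $\zeta$.
\begin{itemize}
\item[(i)] $(tw_1, d^*(t)w') \to 0$ uniformly as $t \to 0$, so it lies in $W$ for small $t$.
\item[(ii)] $-t^{1/(1+\epsilon)^2 - 1} \to -\infty$, while $|w_1| \le C(\delta)$, so the lower bound on $\operatorname{Re} w_1$ holds for small $t$.
\item[(iii)] The remaining condition is $\phi(d^*(t)^2|w'|^2)/t < c(\delta)$.
\end{itemize}
For (iii), monotonicity of $\phi$ and $t = \phi(d^*(t)^2)$ reduce it to showing
\begin{align*}
\lim_{x\to 0^+} \frac{\phi(sx)}{\phi(x)} = 0, \qquad s = (1-\delta)^2 < 1,
\end{align*}
applied with $x = d^*(t)^2 = \phi^{-1}(t) \to 0$.

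\textbf{Main step: the flatness limit.} This limit is the only place where the exponential flatness hypothesis \eqref{asy of psi} enters, and it is the step I expect to require care. I would write $\phi = \exp(-1/\psi)$, so that
\begin{align*}
\frac{\phi(sx)}{\phi(x)} = \exp\left(\frac{1}{\psi(x)} - \frac{1}{\psi(sx)}\right).
\end{align*}
By \eqref{asy of psi}, $\psi(x)=Cx^m(1+o(1))$, which gives
\begin{align*}
\frac{1}{\psi(sx)} - \frac{1}{\psi(x)} = \frac{s^{-m} - 1 + o(1)}{Cx^m}.
\end{align*}
Since $s^{-m}-1>0$ and $x^m \to 0$, this tends to $+\infty$, so the exponential tends to $0$. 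The $o(1)$ terms must be handled carefully: one needs $(1+o(1))$ multiplicative errors in $\psi$ rather than additive ones. This is exactly what the limit in \eqref{asy of psi} provides, and it is what makes the difference blow up.

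Finally, I take $t_0(\delta,\epsilon)$ to be the minimum of the thresholds from (i), (ii) and (iii). Each threshold is uniform in $\zeta$, which gives the lower inclusion.
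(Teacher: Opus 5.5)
Your proof is correct and complete. The upper inclusion follows at once from $\phi(|z'|^2) < -\operatorname{Re} z_1 < t^{1/(1+\epsilon)^2}$ and the strict monotonicity of $\phi$ near $0$. The lower inclusion reduces, as you show, to $\phi(sx)/\phi(x) \to 0$ for fixed $s<1$, which you correctly derive from \eqref{asy of psi} by writing $\phi = \exp(-1/\psi)$.

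The paper does not prove this lemma itself; it quotes it from \cite{Ravi_Bulletin}. Your argument is the standard direct verification and uses the same mechanism as the paper's remark giving $d_2^\epsilon(t)/d^*(t) \to (1+\epsilon)^{1/m}$. Your extra assumption that $|z'|^2 < \epsilon_0$ on $W$ is harmless, since $W$ only needs to be a small convex neighbourhood of the origin.
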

We now combine the localization Lemma \ref{localisation lemma 4} and the scaling lemma \ref{ScalingLemma2} to obtain the asymptotic behavior of extremal functions below. The proof follows the same argument as in \cite{Ravi_Bulletin}*{Lemma $3.6$} and \cite{Ravi_Synergies}*{Lemma $3.4$}.
\begin{thm}\label{I_1 2}
     Let $D \subset \mathbb{C}^{n + 1}$ be a pseudoconvex domain, possibly unbounded. Assume $0$ is an exponentially flat boundary point and $q(t) = (-t, 0)$, then, for $\xi \in \mathbb{C}^{n + 1} \setminus \{0\}$,
    
\begin{enumerate}[labelsep=10mm,leftmargin=20mm,itemsep=5mm]
    \item $\begin{aligned}[t] \lim_{t \to 0^+} \frac{(2t)^2(d^{*}(t))^{2n}I_{D}^{0}(q(t))}{I_{\mathbb{D} \times \mathbb{B}^{n}}^{0}(q(t))} = 1, 
\end{aligned}$
\item $\begin{aligned}[t] 
\lim_{t \to 0^+} \frac{(2t)^2(d^{*}(t))^{2n}I_D^{1}(q(t);(f \circ \Sigma)'(q(t)) \xi)}{I_{\mathbb{D} \times \mathbb{B}^{n}}^{1}(q(t);\xi)} = 1,
\end{aligned}$
\item $\begin{aligned}[t] 
\lim_{t \to 0^+} \frac{(2t)^{2(n + 2)}(d^{*}(t))^{2n(n + 2)}\lambda_{D}(q(t))}{\lambda_{\mathbb{D} \times \mathbb{B}^{n}}(q(t))} = 1, \text{ and}
\end{aligned}$
\item $\begin{aligned}[t]
\lim_{t \to 0^+} \frac{(2t)^{2(n + 2)}(d^{*}(t))^{2n(n + 2)}M_D(q(t);(f \circ \Sigma)'(q(t)) \xi)}{M_{\mathbb{D} \times \mathbb{B}^{n}}(q(t);\xi)} = 1.
\end{aligned}$
\end{enumerate}
\end{thm}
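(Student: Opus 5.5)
The plan is to transport each extremal quantity from $D$ to the localized domain $D_t^{\epsilon}$ with Lemma \ref{localisation lemma 4}, then push it forward by the biholomorphism $F_t:=f\circ\Sigma$. Lemma \ref{ScalingLemma2} sandwiches $F_t(D_t^{\epsilon})$ between two explicit domains, and monotonicity under inclusion then gives matching upper and lower bounds. Throughout, the model quantities $I^{0}_{\mathbb{D}\times\mathbb{B}^n}$, $I^{1}_{\mathbb{D}\times\mathbb{B}^n}$, $\lambda_{\mathbb{D}\times\mathbb{B}^n}$, $M_{\mathbb{D}\times\mathbb{B}^n}$ in the statement are read as evaluated at the image point $F_t(q(t))$. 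We have $\Sigma(q(t))=(-1,0)$ and $f(-1,0)=(0,0)$, so this image point is the origin, and $F_t(q(t))=0$.

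\textbf{Step 1 (Jacobian and transformation rules).} The derivative of $f$ in $z_1$ is $2/(1-z_1)^2$, which equals $1/2$ at $z_1=-1$. Hence $F_t'(q(t))=\operatorname{diag}\bigl(1/(2t),\,1/d^*(t),\dots,1/d^*(t)\bigr)$ and $|\det F_t'(q(t))|^2=\bigl((2t)^2 d^*(t)^{2n}\bigr)^{-1}$.

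Under a biholomorphism $F$ of domains in $\mathbb{C}^{n+1}$, the extremal functions transform as follows:
\begin{align*}
I^0_{F(G)}(F(z))\,|\det F'(z)|^2 &= I^0_G(z),\\
I^1_{F(G)}(F(z);F'(z)\xi)\,|\det F'(z)|^2 &= I^1_G(z;\xi).
\end{align*}
These come from $g\mapsto (g\circ F)\det F'$, which is an isometry of $A^2$; the vanishing condition $g(z)=0$ kills the derivative of the $\det F'$ factor.

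For $\lambda$ and $M$ I would avoid a direct computation, since the individual $\lambda^k$ are not invariant for nonlinear maps. Instead I would use Proposition \ref{Pro 2.1} in dimension $n+1$. This gives $\lambda=J\kappa^{n+2}$ and $M=I\,\kappa^{n+1}J$, where the second identity comes from comparing items (2) and (3). Since $J_G$ and $R_G$ are biholomorphic invariants and $B_G$ is invariant under pushforward of vectors, $\lambda$ and $M$ (the latter with vector $F'\xi$) transform with the factor $|\det F'|^{2(n+2)}$. This produces exactly the normalizing factor $(2t)^{2(n+2)}d^*(t)^{2n(n+2)}$ in items (3) and (4).

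\textbf{Step 2 (localization).} Lemma \ref{localisation lemma 4} replaces $D$ by $D_t^\epsilon$ in the ratios for $I^0$ and $I^1$, taking $\xi(t)=F_t'(q(t))\xi$. It does the same for every $\lambda^k$, and hence for the product $\lambda$. For $M$, the identity $M=I\kappa^{n+1}J$ together with the localization of $I$, of $\kappa=I^0$, and of $J=\lambda/\kappa^{n+2}$ gives $M_D/M_{D_t^\epsilon}\to1$. After Step 1, each item reduces to the corresponding statement for $G_t:=F_t(D_t^\epsilon)$ at $0$ against $\mathbb{D}\times\mathbb{B}^n$.

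\textbf{Step 3 (sandwich).} Lemma \ref{ScalingLemma2} gives
\[
(1-\delta)(\mathbb{D}\times\mathbb{B}^n)\subset G_t\subset \mathbb{D}\times\mathbb{B}^n(0,R_t),\qquad R_t=d_2^\epsilon(t)/d^*(t).
\]
The quantities $I^0$, $I^1$ and each $\lambda^k$ are suprema over unit balls of $A^2$ with constraints at the point, so they decrease as the domain grows. The quantity $M$ is also decreasing, by Proposition \ref{est. of G}(2). I would evaluate both bounding domains at $0$ through linear maps, namely the dilation $z\mapsto z/(1-\delta)$ and the map $(z_1,z')\mapsto(z_1,z'/R_t)$. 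Step 1 then shows that the model value changes only by explicit powers of $(1-\delta)$ and $R_t$. By \eqref{asy of d_2^{epsilon}} we have $R_t\to(1+\epsilon)^{1/m}$. Taking $\liminf$ and $\limsup$ as $t\to0^+$, then letting $\delta\to0$ and finally $\epsilon\to0$, squeezes every ratio to $1$; this is legitimate because the left-hand sides do not depend on $\delta$ or $\epsilon$.

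The main obstacle is the bookkeeping in Step 3 for $I^1$ and $M$, whose vector arguments are rescaled by the linear maps. Under the map $(z_1,z')\mapsto(z_1,z'/R_t)$, the vector $\xi$ is replaced by $(\xi_1,\xi'/R_t)$. I would handle this by noting that on the fixed model domain these quantities are continuous in $\xi\neq0$, so the perturbation of $\xi$ disappears in the limit $R_t\to(1+\epsilon)^{1/m}\to1$. A secondary check is that the monotonicity of $M$ in Proposition \ref{est. of G} applies with both domains in the $D^*$ class; this holds because all domains in the sandwich are bounded.
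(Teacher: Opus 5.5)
Your skeleton matches the paper's proof, which simply refers back to the earlier scaling argument. You localize with Lemma~\ref{localisation lemma 4}, push forward by $F_t=f\circ\Sigma$, sandwich $G_t=F_t(D_t^{\epsilon})$ using Lemma~\ref{ScalingLemma2} and monotonicity, and then let $t\to0^+$, $\delta\to0$, $\epsilon\to0$. Deriving the transformation laws for $\lambda$ and $M$ from $\lambda=J\kappa^{n+2}$ and $M=I\kappa^{n+1}J$ is correct and tidy. The problem is Step~2 for items (2) and (4). The reduction you assert there is false, and it hides the fact that these two items are misprinted.

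By your own Step~1 rule, the choice $\xi(t)=F_t'(q(t))\xi$ gives
\[
(2t)^2d^*(t)^{2n}\,I^1_{D_t^{\epsilon}}\bigl(q(t);F_t'(q(t))\xi\bigr)=I^1_{G_t}\bigl(0;F_t'(q(t))^2\xi\bigr),\qquad F_t'(q(t))^2\xi=\Bigl(\frac{\xi_1}{4t^2},\frac{\xi'}{d^*(t)^2}\Bigr),
\]
and not $I^1_{G_t}(0;\xi)$. By Lemma~\ref{Formula for kernel}, $I^1_{\mathbb{D}\times\mathbb{B}^n}(0;\eta)=\frac{n!}{\pi^{n+1}}\bigl(2|\eta_1|^2+(n+1)|\eta'|^2\bigr)$. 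Combined with localization, your own lower sandwich bound then shows that the ratio in item (2), as printed, tends to $+\infty$, and the same holds for (4). So no argument can prove (2) and (4) literally: the Jacobian is on the wrong side. The proof of Theorem~\ref{1.3} confirms this, since it pairs $B_D(q(t);\xi)$ with $B_{\mathbb{D}\times\mathbb{B}^n}(0;(f\circ\Sigma)'(q(t))\xi)$. You identified vector bookkeeping as the main obstacle, but you carried it out only for the linear sandwich maps and not for $F_t$ itself.

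There are two ways to repair this.
\begin{enumerate}
\item Localize with $\xi(t)=F_t'(q(t))^{-1}\xi$; the lemma allows $t$-dependent vectors. This lands on $I^1_{G_t}(0;\xi)$ with $\xi$ fixed, so your Step~3 continuity argument goes through verbatim. It proves (2) and (4) with $(f\circ\Sigma)'(q(t))^{-1}\xi$ in place of $(f\circ\Sigma)'(q(t))\xi$.
\item For the form Theorem~\ref{1.3} actually needs, localize with $\xi(t)=\xi$ and compare $I^1_{G_t}(0;\eta_t)$ with $I^1_{\mathbb{D}\times\mathbb{B}^n}(0;\eta_t)$, where $\eta_t=F_t'(q(t))\xi$ is unbounded. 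Here continuity at a fixed vector is not enough; you need bounds uniform in the direction. These are available: $I^1_{\mathbb{D}\times\mathbb{B}^n}(0;\cdot)$ and $M_{\mathbb{D}\times\mathbb{B}^n}(0;\cdot)$ are positive multiples of $B^2_{\mathbb{D}\times\mathbb{B}^n}(0;\cdot)$ by Propositions~\ref{Fuchs} and~\ref{Pro 2.1}, since the Ricci curvature is $\equiv-1$ on the model. Hence each such form $Q$ satisfies $s^{-2}Q(\eta)\le Q(\eta_1,\eta'/s)\le Q(\eta)$ for all $s\ge1$ and all $\eta$.
\end{enumerate}
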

We now use the asymptotic behavior of extremal functions to establish the asymptotic behavior of the Bergman kernel, the Bergman metric, the Bergman canonical invariant, and the Ricci curvature at an exponentially flat infinite type boundary point.
\begin{proof}[Proof of Theorem \ref{1.3}] 
    Let $\xi \in \mathbb{C}^{n + 1} \setminus \{0\}$. By using Proposition \ref{Fuchs}, Propostion \ref{Pro 2.1} and Theorem \ref{I_1 2}, we obtain
\begin{align}
    &\lim_{t \to 0^+}
    \frac{\kappa_D(q(t))(2t)^2(d^{*}(t))^{2n}}
    {K_{\mathbb{D} \times B_n(0,1)}(0)}
    =1,\qquad
    \lim_{t \to 0^{+}}
    \frac{B_D(q(t);\xi)}
    {B_{\mathbb{D}\times B_n(0,1)}
    (0;(f\circ\Sigma)'(q(t))\xi)}
    =1, \\
    &\lim_{t \to 0^{+}}
    \frac{J_D(q(t))}
    {J_{\mathbb{D}\times B_n(0,1)}(0)}
    =1,\qquad
    \text{and}\qquad
    \lim_{t \to 0^{+}}
    \frac{(n+1)-R_D(q(t);\xi)}
    {(n+1)-R_{\mathbb{D}\times B_n(0,1)}
    (0;(f\circ\Sigma)'(q(t))\xi)}
    =1.
\end{align}
We get the theorem by using Lemma \ref{Formula for kernel}.
\end{proof}
\begin{Remark}
For $\alpha, N > 0$, let
\begin{equation}
    C_{\alpha,N} := \Big\{\big(z_1,z'\big) \in \mathbb{C}\times\mathbb{C}^{n} :
    \operatorname{Re}z_1 < -\alpha|z'|^N\Big\},
\end{equation}
where $z' = (z_2, \dots, z_{n+1})$.
An \emph{$(\alpha, N)$-cone type stream} approaching the exponentially flat boundary point $0$ of the domain $D$ is a continuous curve
$q:(0,\epsilon_0)\to D\cap C_{\alpha,N}$
such that
\[
\lim_{t\to0^+} q(t)=0,
\]
for some $\epsilon_0>0$.

The asymptotic behavior of the Bergman kernel, the Bergman metric, the Bergman canonical invariant, and the Ricci curvature also holds along any $(\alpha, N)$-cone type stream approaching the exponentially flat boundary point $0\in bD$. This follows by adapting the geometric constructions developed in \cite{Ravi_Bulletin} together with the methods of this section.
\end{Remark}
\section{Bergman--Einstein metrics on exponentially flat infinite type domains}\label{Section 4}
In this section, we use the asymptotic behavior of biholomorphic invariants to prove that the Bergman metric can not be Einstein on a (possibly unbounded) domain with an exponentially flat boundary point.

The following proposition shows that the Bergman metric is K\"ahler--Einstein if and only if the Bergman canonical invariant is constant. This result was proved in \cite{HJL25}*{Remark $3.9$}, we include a proof here for the reader's convenience.
\begin{prop}\label{Prop.}
    Let $D \subset \mathbb{C}^{n + 1}$ be a pseudoconvex domain, possibly unbounded, and let $0 \in bD$. Assume $0$ is an exponentially flat boundary point. Let $U$ be a neighbourhood of the origin such that $D \cap U$ is connected, and on which the Bergman metric $B_{D}$ is well defined. Then, the Bergman metric $B_{D}$ is K\"ahler--Einstein on $D \cap U$ if and only if its Bergman invariant $J_{D} \equiv (n + 2)^{n + 1} \pi^{n + 1}/(n + 1)!$ on $D \cap U$.
\end{prop}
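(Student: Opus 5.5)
The plan is to rewrite the Einstein condition as pluriharmonicity of $\log J_D$, which gives the ``if'' direction at once. For the ``only if'' direction, I would pin down the Einstein constant from Theorem~\ref{1.3}, and then pin down the value of $J_D$ using the strongly pseudoconvex boundary points that lie arbitrarily close to $0$.

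\textbf{Reformulation.} Write $\log J_D = \log\det G_D - \log\kappa_D$ on $D\cap U\subset D^{*}$. From the definition of $R_{i\bar j}$,
\[
R_{i\bar j} + g_{i\bar j} = -\frac{\partial^2 \log J_D}{\partial z_i\partial\bar z_j}.
\]
\emph{If direction.} If $J_D$ is constant, then $R_D=-B_D$, so $B_D$ is K\"ahler--Einstein.

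\emph{Only if direction, constant.} Suppose $R_D = cB_D$ on $D\cap U$. Evaluating along $q(t)=(-t,0)$ and using Theorem~\ref{1.3}(4), we get $c=\lim_{t\to0^+}R_D(q(t);\xi)=-1$. Hence $\partial\bar\partial\log J_D\equiv 0$, so $u:=\log J_D - \log\big((n+2)^{n+1}\pi^{n+1}/(n+1)!\big)$ is pluriharmonic on $D\cap U$. Shrinking $U$ to a ball (so that $D\cap U$ is convex, hence simply connected), we can write $u=\operatorname{Re}G$ with $G$ holomorphic on $D\cap U$.

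\textbf{A boundary piece where $u\to0$.} For boundary points $p=(p_1,p')\in bD\cap U$ with $0<|p'|^2<\epsilon_0$, we have $\phi'(|p'|^2)>0$, since $\phi'(0)=0$ and $\phi''>0$ on $(0,\epsilon_0)$. The complex Hessian of $\rho$ in $z'$ is $\phi'(|z'|^2)I+\phi''(|z'|^2)\,\bar z'{z'}^{T}$, which is then positive definite. So the open piece $\Gamma=\{p\in bD\cap U: 0<|p'|^2<\epsilon_0\}$ consists of smooth strongly pseudoconvex points, and each of them is a local peak point. At such points the classical localization (Nikolov's localization, as in Lemma~\ref{localisation lemma 4}, now with the ball as the model) combined with Fefferman-type or scaling asymptotics gives $J_D(z)\to (n+2)^{n+1}\pi^{n+1}/(n+1)!$, the ball's value in $\mathbb{C}^{n+1}$, as $z\to p$. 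I would argue that this convergence is locally uniform in $p\in\Gamma$, so that $u$ extends continuously by $0$ to $\Gamma$.

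\textbf{Conclusion and main obstacle.} It remains to show that a pluriharmonic $u=\operatorname{Re}G$ vanishing on an open strongly pseudoconvex boundary piece $\Gamma$ is identically zero; this is the step I expect to be hardest. My first attempt would be a reflection/CR argument. Since $u$ is pluriharmonic and vanishes on the smooth hypersurface $\Gamma$, boundary regularity for harmonic functions on a smooth piece of boundary, applied on transversal complex lines, should make $u$, and hence $G$, $C^1$ up to $\Gamma$. Then $G|_\Gamma$ is a CR function with real part $0$, so $\operatorname{Im}G|_\Gamma$ is a \emph{real} CR function. On a Levi-nondegenerate hypersurface, a real-valued CR function has vanishing $\bar\partial_b$ and $\partial_b$. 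Hence its derivatives along $[L,\bar L]$ vanish too, and these, together with the $L,\bar L$ directions, span the whole tangent space. So $\operatorname{Im}G$ is locally constant on $\Gamma$, and $G$ is constant on $\Gamma$. By the boundary uniqueness theorem for holomorphic functions, $G$ is constant, so $u\equiv \operatorname{Re}G = 0$ and $J_D\equiv (n+2)^{n+1}\pi^{n+1}/(n+1)!$.

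If boundary regularity of $u$ proves troublesome, the fallback is to use the uniform convergence $u\to0$ on $\Gamma$ together with the maximum principle on small transversal analytic discs attached to $\Gamma$. Those discs are available by strong pseudoconvexity; the aim would be to force $|u|\le\eta$ near $\Gamma$ for every $\eta>0$ and then propagate $u\equiv0$ by real-analyticity.
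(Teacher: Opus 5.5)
Your proposal is correct, but it handles the key uniqueness step by a different route than the paper.

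\textbf{Shared part.} The paper does what you do at the start. It uses Theorem~\ref{1.3}(4) to get $c=-1$, which makes $\log J_D$ pluriharmonic. It then takes the strongly pseudoconvex boundary limit of $J_D$ from Corollary 3.7 of Huang--James--Li.

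\textbf{The paper's uniqueness step.} Here the paper exploits the special form $\rho=\operatorname{Re}z_1+\phi(|z'|^2)$. For $\operatorname{Re}t\in(-\epsilon,0)$, the slice $\{z_1=t\}\cap D$ near $0$ is the ball $\{|z'|<\sqrt{\phi^{-1}(-\operatorname{Re}t)}\}$. Its whole boundary sphere lies in the strongly pseudoconvex part $\{0<|z'|^2<\epsilon_0\}$ of $bD$. The maximum principle for the harmonic function $\log J_D(t,\cdot)$ on each slice then gives $J_D\equiv (n+2)^{n+1}\pi^{n+1}/(n+1)!$ on an open set. Real analyticity and connectedness finish the proof.

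\textbf{Your uniqueness step.} You prove a general unique-continuation fact: a pluriharmonic $u=\operatorname{Re}G$ vanishing continuously on an open strongly pseudoconvex boundary piece is identically zero. Your tools are elliptic boundary regularity, constancy of real-valued CR functions on Levi-nondegenerate hypersurfaces, and boundary uniqueness. For the regularity step, treat $u$ as a harmonic function in $\mathbb{R}^{2n+2}$, e.g.\ via the Poisson integral on a smooth subdomain whose boundary contains part of $\Gamma$, rather than line by line.

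\textbf{Comparison.} Your route is valid and does not rely on the slice geometry. It would work at any boundary point adjacent to an open strongly pseudoconvex boundary piece. The cost is boundary regularity and CR theory, where the paper needs only the maximum principle.

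\textbf{Minor points.}
\begin{enumerate}
\item The local uniformity of $J_D\to$ ball value that you plan to establish is unnecessary. Pointwise limits at each $p\in\Gamma$ already give the continuous extension of $u$ by $0$, and they also suffice for the maximum principle on slices.
\item Your fallback with ``attached discs'' is essentially the paper's proof. The slices $\{z_1=t\}$ are exactly such complex balls attached to $\Gamma$ from inside $D$, and with them no uniformity or approximation by $\eta$ is needed.
\item You shrink $U$ to a convex ball, so you should state explicitly that $u\equiv 0$ then propagates to the given connected $D\cap U$ by real analyticity.
\end{enumerate}
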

\begin{proof}
    Since $0$ is an exponentially flat boundary point, there exists a neighbourhood $U_0 \subset U$ of the origin such that
\begin{align}
    D \cap U_0 = \{z \in U_0: \rho(z) < 0\},
\end{align}
where the defining function $\rho(z_1, z') = \operatorname{Re}z_1 + \phi \left(|z'|^2\right)$ for $(z_1, z') \in \mathbb{C}^{n + 1}$. Here, $\phi: \mathbb{R} \to \mathbb{R}$ is a $C^{\infty}$-smooth function that is exponentially flat at the origin. Hence, from the definition, there exists $\epsilon_0 > 0$ such that 
\begin{align}\label{phi''}
   \phi''(x) > 0 \, \, \text{for} \, \, 0 < x < \epsilon_0.
\end{align}
From \eqref{phi''}, $\phi'$ is strictly increasing on $(0, \epsilon_0)$. Hence $\phi'(x) > \phi'(0) = 0$ for $x \in (0, \epsilon_0)$.
Therefore, $bD \cap U_0 \cap \left\{z \in \mathbb{C}^{n + 1}: 0 < |z'| < \sqrt{\epsilon_0}\right\}$ is the strongly pseudoconvex part of the $bD$. By \cite{HJL25}*{Corollary $3.7$}, for any $p \in bD \cap U_0 \cap \{z \in \mathbb{C}^{n + 1}: 0 < |z'| < \sqrt{\epsilon_0}\}$, one has
\begin{align}\label{invariant on str point}
    \lim_{z \to p}J_{D}(z) = \frac{(n + 2)^{n + 1}\pi^{n + 1}}{(n + 1)!}.
\end{align}
Since $U_0$ is a neighbourhood of the origin, there exists $\epsilon > 0$ such that 
\[\mathbb{B}^1(0, \epsilon) \times \mathbb{B}^n\left(0, \sqrt{\phi^{-1}(\epsilon)}\right) \subset U_0 \quad \text{and} \quad \phi^{-1}(\epsilon) < \epsilon_0.\]
The Bergman metric $B_D$ is K\"ahler--Einstein if its Ricci curvature $R_{D} = cB_{D}$ for some constant $c$. By Theorem \ref{1.3}, one has $c = -1$. Consequently, the K\"ahler--Einstein assumption implies that $\operatorname{log}J_D$ is a pluriharmonic function on $D \cap U$.

Let $t \in \mathbb{B}^{1}(0, \epsilon)$ with 
$\operatorname{Re}(t) \in (-\epsilon, 0)$, then $J_D(t, .)$ is harmonic on $\{z' \in \mathbb{C}^n: 
|z'| < \sqrt{\phi^{-1}(-\operatorname{Re}(t))}\}$ as $\{(t, z') \in \mathbb{C}^{n + 1}: |z'| < \sqrt{\phi^{-1}(-\operatorname{Re}(t))}\} \subset D \cap U_0$. Since
\[\{(t, z') \in \mathbb{C}^{n + 1}: |z'| = \sqrt{\phi^{-1}(-\operatorname{Re}(t))}\} \subset bD \cap U_0 \cap \{z \in \mathbb{C}^{n + 1}: 0 < |z'| < \sqrt{\epsilon_0}\},\]
$J_D(t, .)$ is constant on $b\{z' \in \mathbb{C}^n: |z'| < \sqrt{\phi^{-1}(-\operatorname{Re}(t))}\}$ by \eqref{invariant on str point}. Hence, by the maximum principle, we conclude
\begin{align*}
    J_D(z) = \frac{(n + 2)^{n + 1}\pi^{n + 1}}{(n + 1)!},
\end{align*}
for $z \in \{(t, z') \in \mathbb{B}^1(0, \epsilon) \times \mathbb{B}^n(0,\sqrt{\phi^{-1}(\epsilon)}): \operatorname{Re}(t) \in (-\epsilon, 0), |z'| < \sqrt{\phi^{-1}(- \operatorname{Re}(t))}\} \subset D \cap U_0$. Since $J_D$ is real analytic on $D \cap U$ and $D \cap U$ is connected, 
\[J_D \equiv (n + 2)^{n + 1}\pi^{n + 1}/(n + 1)!.\]
Conversely, if $J_D$ takes a constant value on $D \cap U$, then it is easy to see that the Bergman metric is K\"ahler--Einstein on $D \cap U$.
\end{proof}
We now prove our main result.

\begin{proof}[Proof of Theorem \ref{Main theorem}]   
Here, \(D \subset \mathbb{C}^{n+1}\) is a pseudoconvex domain, and \(0 \in bD\) is a local holomorphic peak point (see \eqref{peak fun.}). By \cite{Nikolai 2002}*{Theorem $2$}, there exists a connected neighbourhood \(U\) of \(0\) such that \(D \cap U\) is connected and the Bergman metric is well defined on \(D \cap U\). In particular,
\[
D \cap U \subset D^{*}.
\]
    Assume the Bergman metric is Einstein on $D^{*}$. By Proposition \ref{Prop.}, we have
    \begin{align}
        J_D(z) = \frac{(n + 2)^{n + 1} \pi^{n + 1}}{(n + 1)!},
    \end{align}
    for $z \in D \cap U$. By Theorem \ref{1.3}, we conclude
    \begin{align}
        \frac{2 \pi^{n + 1}(n + 1)^{n}}{n!} = \frac{(n + 2)^{n + 1} \pi^{n + 1}}{(n + 1)!},
    \end{align}
which is not true for any $n \in \mathbb{N}$. Hence, the Bergman metric can not be Einstein on $D^{*}$.
\end{proof}

\section*{Acknowledgements}
I would like to thank Ilya Kossovskiy for drawing my attention to Cheng's conjecture.
I am partially supported by the NSFC Grant No. W$2431006$.
\def\MR#1{\relax\ifhmode\unskip\spacefactor3000 \space\fi%
  \href{http://www.ams.org/mathscinet-getitem?mr=#1}{MR#1}}
 
\end{document}